\def\be{\begin{equation}}
\def\en{\end{equation}}
\def\max{\text{max}}
\definecolor{darkgreen}{rgb}{.1,.6,0}
\newcommand{\N}{\mathbb{N}}
\newtheorem{theorem}{Theorem}[section] 
\newtheorem{lemma}[theorem]{Lemma}     
\newtheorem{proposition}[theorem]{Proposition}
\theoremstyle{definition}
\newtheorem{definition}[theorem]{Definition}
\newtheorem{example}[theorem]{Example}
\newtheorem*{ack*}{Acknowledgment}
\theoremstyle{remark}
\newtheorem{remark}[theorem]{Remark}
\numberwithin{equation}{section}
\title{Arrival of information at a target set in a network}
\author{Karl Petersen}
\address{CB 3250 Phillips Hall,	University of North Carolina, Chapel Hill, NC 27599 USA}
\email{petersen@email.unc.edu}
\author{Ibrahim Salama}
\address{School of Business, North Carolina Central University, Durham, NC}
\email{isalama@nccu.edu}
\date{\today}
\begin{document}

		\subjclass[2020]{37B10, 37B40, 05C50, 82B20}
	\keywords{Tree shift, transition matrix, primitive matrix, entropy, pressure, information transfer}

\begin{abstract}
	We consider labelings of a finite regular tree by a finite alphabet subject to restrictions specified by a nonnegative transition matrix, propose an algorithm for determining whether the set of possible configurations on the last row of the tree is independent of the symbol at the root, and prove that the algorithm succeeds in a bounded number of steps, provided that the dimension of the tree is greater than or equal to the maximum row sum of the transition matrix. (The question was motivated by calculation of topological pressure on trees and is an extension of the idea of primitivity for nonnegative matrices.)
	\end{abstract}
	
\maketitle

{\tiny }
		\section{introduction}\label{sec:intro}
		Information being transmitted through a network or stored on it is of course subject to the network connections, and it may be further constrained by relations within the alphabet of symbols.
		Let $D=\{1,\dots,d\}$ be a finite alphabet, from which messages, which we take to be strings in $D^*=\cup \{D^n:n=0,1,\dots\}$, can be formed.
		Some simple restrictions on messages are {\em nearest neighbor constraints} determined by a nonnegative matrix $A$: symbols $a,b \in D$ are allowed to occupy the initial and terminal vertices of a directed edge of the network if and only if $A(a,b)>0$. 
		Such constraints may arise from physical properties of the network or the writing or reading mechanisms. 
		
		We are interested in which messages can be seen on (or ``arrive at") a specified target set of vertices $\mathcal T$ if a starting symbol $i \in D$ is assigned to an initial vertex $v_0$. 
		Define $\mathcal A(i, v_0,\mathcal T)$ to be the {\em arrival set at $\mathcal T$ given $i$ at the start, $v_0$}; namely, $\mathcal A(i,v_0,\mathcal T)$ is  the set of labelings of $\mathcal T$ that extend to valid (according to the restrictions imposed by the transition matrix $A$) labelings of the entire network that have label $i$ at the root.
		Can all initial symbols produce the same set of messages on the target set, that is, does $\mathcal A(i, v_0,\mathcal T)=\mathcal A(j, v_0,\mathcal T)$ for all $i,j \in D$? 
		In such a case we can say that the matrix $A$ is {\em $(v_0,\mathcal T$)-fair}. 
		
				Even more, is every possible set of configurations $D^{\mathcal T}$ achievable from every initial symbol {at $v_0$}? 
		If so, we can call $A$ {{\em $(v_0,\mathcal T)$-complete}}. 
		When the tree is $1$-dimensional, and so naturally identified with $\N$, the transition matrix $A$ is $v$-complete for vertices $v$ far enough from $v_0$ if and only if it is {\em primitive}: there is $p \in \N$ such that $A^p>0$. 
			 	(Wielandt \cite{Wielandt1950} determined the smallest such $p$, called the {\em exponent.} See also \cites{Dulmage1962,Schneider2002,Brualdi1991}.)
			 	
			 		In previous work \cite{PS2023}, it was discovered that $\mathcal T$-fairness of $A$ is a sufficient condition for existence as a limit of the topological pressure defined by $A$ on a tree. 
			 		{Thus this concept is of interest for the study of symbolic dynamics and statistical physics on trees, and potentially on other networks.}
			 	{A fundamental question, extending the idea of primitivity for matrices, asks: how can one determine whether a given transition matrix is or is not fair with respect to a given network and initial site?
			 		Here we seek conditions on $A$ that guarantee that $A$ is $\mathcal T$-fair when $v_0=\epsilon$ is the root of the tree and $\mathcal T=L_n$, the $n$'th row of the tree.	
			 	While we do not answer the question completely, we provide an algorithmic answer for regular trees under moderate assumptions---a first step, and an invitation to further investigation.} 
			 	
			 	
			 	The following section establishes necessary terminology and notation and begins the study of relations among symbols of the alphabet placed at the root. 
			 	We say that a symbol $i$ can be {\em replaced} by a symbol $j$ on a {tree} of height $n$ if for every labeling of the tree that has $i$ at the root there is a labeling which has $j$ at the root such that the two labelings agree on row $n$, that is, 
			 	$\mathcal A(i, \epsilon, L_n) \subseteq \mathcal A(j,\epsilon,L_n)$ (Definition \ref{def:relation}).
			 		 	We develop machinery for finding such relations by examining the transition matrix and in
			 	Section \ref{sec:procedure} formalize the process as a definite algorithm.  
			 	Section \ref{sec:works} includes our main result, Theorem \ref{thm:disc}: If the dimension of the tree is greater than or equal to the maximum row sum of the matrix, then the algorithm successfully finds all relations that exist, thereby answering the question of whether or not the matrix is ``fair" in the sense that the efficacy of transmission of information from the root of a tree to its last row is independent of the choice of initial symbol.
			 	Example \ref{ex:smallk} shows that the hypothesis involving dimension and row sum is necessary.

			 			 {\section{Basic definitions; the sets $P(k,n)$.}\label{sec:defs}}
			 				Recall that the {regular} $k$-tree $\tau$ ($k \geq 1$) is (or corresponds to) the set $K^*$ of $1$-sided infinite strings on $K=\{1, \dots, k\}$.
			 				A string of length $n$ corresponds to a site (or node or vertex) at level $n$ in the tree. 
			 				We denote by $L_n$ the set of sites at level $n$ and by $\Delta_n$ the set of all sites of length no more than $n$. 
			 				The empty string $\epsilon$ corresponds to the root, at level $0$.
			 		{For a site $u \in K^*$} and symbol $g \in K$, {we regard $(u,ug)$} as the {\em directed edge} with {\em source vertex} $u$ and {\em target vertex} $ug$. 
			 			Each vertex $ug, g \in K$, is a {\em successor} or {\em child} of the vertex $u$, and $u$ is the {\em predecessor} or {\em parent} of each $ug, g \in K$.
			 				
			 				A {\em labeling} of $\tau$ is a function $\lambda: \tau \to D$, for the finite alphabet $D=\{1, \dots , d\}$, $d \geq 1$. 
			 				We are considering labelings $\lambda$ that are consistent with a $d \times d$ nonnegative matrix $A$:			 			
			 				our set of allowed labelings is
			 				\be 
			 					X_A= \{\lambda \in  D^\tau: A(\lambda(x),\lambda(xg)) > 0 \quad\text{for all } x \in \tau, \, g=1, \dots,k \}.
			 				\en 
			 				Since if necessary $A$ can be replaced by a $0,1$ matrix that allows exactly the same transitions, we assume henceforth that $A$ has entries only $0$ or $1$. 
			 					For each $i \in D$ denote by $S(i)$ the set of allowed followers of $i$:
			 			\be
			 			S(i) = \{j \in D: A(i,j)=1\}.
			 			\en 
			 	We use the notations
			 			\be 
			 			|A_i|= \sum_{j=1}^d A(i,j), \quad s_A= \max_i |A_i|.
			 			\en 
			 			Thus we are interested in labelings $\lambda$ of the $k$-tree $\tau$ such that for all $x \in \tau$ and $g \in K$, we have $\lambda(xg) \in S(\lambda(x))$.
			 				Henceforth, unless stated otherwise, ``labeling" will mean  labeling that is valid, according to the restrictions imposed by the given transition matrix $A$.
			 				
			 						\begin{definition}\label{def:arrivalset}
			 					{For $v_0 \in \tau, i \in D,$ and $\mathcal T \subset \tau$, the {\em arrival set at $\mathcal T$ given $i$ at $v_0$} is
			 						the set of labelings of $\mathcal T$ that extend to valid (according to the restrictions imposed by the transition matrix $A$) labelings of the entire network that have label $i$ at $v_0$:
			 						\be 
			 						 \mathcal A(i,v_0,\mathcal T) = \{\lambda|_{\mathcal T}: \lambda \in X_A, \, \lambda(v_0)=i\}.
			 				\en }
			 				\end{definition}
			 				
			 		{\begin{definition}
			 			If $v_0 \in \tau$ is a vertex and $\mathcal T \subset \tau$ is a prospective ``target set", we say that the nonnegative transition matrix $A$ is 
			 			{\em $(v_0,\mathcal T$)-fair} if 
			 			$\mathcal A(i, v_0,\mathcal T)=\mathcal A(j, v_0,\mathcal T)$ for all $i,j \in D$. 
			 		\end{definition}}	
		 		
		 			\begin{definition}\label{def:P}
			 					For any $k \geq 1$ and $n \geq 0$, the set $P(k,n)$ consists of all the $d \times d$ nonnegative matrices $A$ that are $(\epsilon, L_n)$-fair, i.e. for all $i,j \in D$ we have $\mathcal A(i,\epsilon, L_n)=\mathcal A(j,\epsilon,L_n)$. \\
			 					The set $P^*(k,n)$ consists of those matrices that are $(\epsilon,L_n)$-complete.\\
			 					Further, $P(k)=\cup_n P(k,n)$, and $P^*(k)=\cup_nP^*(k,n)$.
			 					\end{definition}
			 					
			 			{Recall that our main question is how to determine whether or not a given matrix $A$ is in some $P(k,n)$ or not.}		
			 			Writing matrices on a single line for convenience, we {recall a few preliminary observations about these sets from \cite{PS2023}.} 
			 			
			 			\begin{enumerate}
			 				\item For all $k,n$, clearly $P^*(k,n) \subseteq P(k,n)$.
			 				\item If $A \in P(k,n)$ for some $k \geq 1$, then $A^n >0$, so that $A$ is primitive.
			 				\item $P(k+1,n) \subseteq P(k,n) \subseteq P(k, n+1)$.
			 				\item If $A^n >0$ and $A$ has a positive row, then $A \in P(k,n+1)$.
			 				\item The matrix $[011|100|010]$ is not in $P(2,n)$ for any $n$.
			 			\end{enumerate}
			 			
			 			{To these observations we can add a few more.}
			 			\begin{proposition}\label{prop:positiverow}
			 					Suppose that $d=|D| = k + 1$ and $A$ is a $d \times d$ $0,1$ primitive matrix.
			 					Then $A \in P^*(k)$ if and only	if $A$ has a positive row.
			 					\end{proposition}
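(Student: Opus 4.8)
The plan is to exploit the self-similarity of the regular $k$-tree to reduce both implications to a single recursive description of the arrival sets, and then to isolate the role of the hypothesis $d=k+1$ in one combinatorial ``blocking'' step. Writing a labeling of $L_n$ as the $k$-tuple of the labelings it induces on the $k$ subtrees hanging from the root (so that $D^{L_n}$ is identified with $(D^{L_{n-1}})^k$), the independence of these subtrees gives the factorization
\[
\mathcal A(i,\epsilon,L_n)=C_n(i)^{\,k},\qquad C_n(i):=\bigcup_{j\in S(i)}\mathcal A(j,\epsilon,L_{n-1}),
\]
with base case $\mathcal A(i,\epsilon,L_0)=\{i\}$. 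Since $S^k=T^k$ forces $S=T$ for nonempty $S,T$, the statement ``$A$ is $(\epsilon,L_n)$-complete from $i$,'' i.e.\ $\mathcal A(i,\epsilon,L_n)=D^{L_n}$, is equivalent to $C_n(i)=D^{L_{n-1}}$. I will call $i$ \emph{good at level $m$} when $\mathcal A(i,\epsilon,L_m)=D^{L_m}$, so that $A\in P^*(k)$ exactly when every symbol is good at some common level.

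For the direction assuming a positive row, suppose row $i_0$ satisfies $S(i_0)=D$. A short induction shows $i_0$ is good at every level: if $\mathcal A(i_0,\epsilon,L_{m})=D^{L_m}$ then $C_{m+1}(i_0)\supseteq\mathcal A(i_0,\epsilon,L_m)=D^{L_m}$, whence $i_0$ is good at level $m+1$, the base case $\mathcal A(i_0,\epsilon,L_1)=S(i_0)^k=D^k$ being immediate. Primitivity then propagates goodness: choosing $p$ with $A^p>0$, every $i$ admits a path $i=x_0\to x_1\to\cdots\to x_p=i_0$, and walking back along it one checks inductively that $x_{p-t}$ is good at level $t+1$ (because possessing a follower that is good at the previous level makes the relevant $C$ full). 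Hence every symbol is good at level $p+1$, so $A\in P^*(k,p+1)$. This half does not actually use $d=k+1$.

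For the converse I argue the contrapositive: if $A$ has no positive row then no symbol is good at any level, so $A\notin P^*(k)$. Here $|S(i)|=|A_i|\le d-1=k$ for every $i$, and no symbol is good at level $1$ (goodness at level $1$ means $S(i)=D$), so the set of symbols good at level $1$ is empty. The inductive step is the heart of the argument: assuming no symbol is good at level $m-1$, I must show none is good at level $m$. Fix $i$ and enumerate $S(i)=\{j_1,\dots,j_s\}$ with $s\le k$. Since each $j_t$ fails to be good at level $m-1$, there is a configuration $h_t\in D^{L_{m-2}}\setminus C_{m-1}(j_t)$. Because the root has exactly $k$ children and $s\le k$, I can place $h_t$ in the $t$-th child-slot and fill the remaining slots arbitrarily, obtaining a tuple in $D^{L_{m-1}}\cong(D^{L_{m-2}})^k$ that, for each $t$, lies outside $\mathcal A(j_t,\epsilon,L_{m-1})=C_{m-1}(j_t)^k$ because its $t$-th coordinate avoids $C_{m-1}(j_t)$. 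This tuple therefore misses $\bigcup_{j\in S(i)}\mathcal A(j,\epsilon,L_{m-1})=C_m(i)$, so $C_m(i)\ne D^{L_{m-1}}$ and $i$ is not good at level $m$; the induction then gives the claim.

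The main obstacle is precisely this blocking step, and it is where $d=k+1$ is indispensable: the construction needs at least one child-slot per element of $S(i)$, which is guaranteed exactly because the absence of a positive row forces $|S(i)|\le k$ while the tree supplies $k$ slots. If $|S(i)|$ could exceed $k$ one could no longer separately block every follower with a single level-$(m-1)$ configuration, which is consistent with the role of the row-sum-versus-dimension hypothesis elsewhere in the paper. I would finally reassemble the two halves and record that primitivity is used only for the ``positive row $\Rightarrow$ complete'' implication.
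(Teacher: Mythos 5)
Your proposal is correct, and while your forward direction is essentially the paper's argument (route every symbol to the positive row $i_0$ along a path of length $p$, after which anything is reachable, giving $A\in P^*(k,p+1)$), your converse takes a genuinely different route. You base everything on the recursive factorization $\mathcal A(i,\epsilon,L_n)=\bigl(\bigcup_{j\in S(i)}\mathcal A(j,\epsilon,L_{n-1})\bigr)^{k}$ and run a single induction on levels: if no follower of $i$ is complete at level $m-1$, you block each $j_t\in S(i)$ by placing a configuration outside $C_{m-1}(j_t)$ in the $t$-th child slot, which is possible exactly because the absence of a positive row together with $d=k+1$ forces $|S(i)|\le k$. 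The paper instead splits into cases by row sums: when every row sum equals $k$ and the rows are distinct, it constructs the ``special configuration'' $\tau_i$ (each node's children carry precisely the $k$ allowed successors), whose restriction to $L_n$ forces the labeling of $L_{n-1}$ and hence the root symbol; when two rows coincide, or some row sum is below $k$, it exhibits a $k$-tuple with no common predecessor, which therefore can never appear on any row. Your induction is uniform --- no case analysis --- and, as you observe, proves the slightly more general statement that $A\notin P^*(k)$ whenever $s_A\le k$ and no row is positive, with failure quantified at every level $n$. The paper's case analysis buys finer structural information in return: in its case (1a) every configuration on $L_n$ is achievable from \emph{some} root but the root is determined, so that argument actually yields the stronger conclusion $A\notin P(k)$ (not merely $A\notin P^*(k)$) in that case, and in the remaining cases it pinpoints explicit configurations that never occur. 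Two small points worth making explicit in a final write-up: your step $C_{m+1}(i_0)\supseteq\mathcal A(i_0,\epsilon,L_m)$ silently uses $i_0\in S(i_0)$, which holds because row $i_0$ is all ones; and the identity $\mathcal A(i,\epsilon,L_n)=C_n(i)^k$ uses that valid labelings of $\Delta_n$ always extend to the full tree, which is guaranteed since primitivity rules out zero rows.
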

			 			\begin{proof} 
			 				For $i=1, \dots,d$ we denote row $i$ of $A$ by $A_i=(A(i,1), \dots, A(i,d))$.
			 					{Suppose that $A$ is primitive, so that $A^p >0$ for some $p>0$ and there is an $i_0=1, \dots, d$ such that $A_{i_0}>0$ (meaning that all $A(i_0,j)>0, j=1, \dots, d$).
			 					{We claim that then $A \in P^*(k,p+1)$.} 
			 					Given $i \in D$, we can form a valid configuration on $\Delta_p$ by putting on each ray starting at the root a valid string that starts with $i$ and ends with $i_0$. 
			 					Then putting on $L_{p+1}$ any string in $D^{|L_{p+1}|}$ forms a valid configuration on $\Delta_{p+1}$.} 
			 				
			 				For the converse, assume that $A$ does not have a positive
			 				row, so that each row sum is less than or equal to $k$. 
			 				We have several cases:\\
			 				(1) Suppose first that the sum of each row is $k$, so that every symbol $i \in D$ has exactly $k$ successors that are allowed by $A$, in other words $|S(i)|=k$ for all $i \in D$. 
			 				Then we have two subcases. \\
			 				Case (1a): The $k + 1$ rows are distinct (meaning that no two are identical). 
			 				In this case all possible $0,1$ strings of length $d=k+1$ with exactly $k$ $1$'s appear among the rows of $A$, each one exactly once. 
			 				Stated differently, the sets $S(i), i \in D$, are distinct.
			 				Then $A$ is not in $P^*(k)$, because 
			 				 for each $i \in D$ we can form a {special configuration} $\tau_i$ on the $k$-tree, as follows. 
			 				Put $i$ at the root. 
			 				Proceeding inductively, at the $k$ nodes below any already labeled node put the $k$  allowed successors of that label, in any order. 
			 				Since in this situation each $k$-tuple from $D$ has a unique common predecessor, 
			 				the configuration $\tau_i$ has the property that for each $n \geq 1$ its configuration on $L_n$ forces the labeling of $L_{n-1}$ and therefore forces a single possible entry at the root.\\
			 				Case (1b): Two (or more) rows are identical. 
			 				In this case there is at least one $k$-tuple  $(x_1,\dots,x_k) \in D^k$ not all of whose entries are allowed successors of the same single symbol 
			 				(that is, $\{x_1, \dots, x_k\} \nsubseteq S(i)$ for any $i \in D$).
			 				 Then this $k$-tuple cannot appear in any $L_n$, so $A \notin P^*(k)$.\\
			 				(2) If at least one row sum is smaller than $k$, then again there is at least one $k$-tuple of elements of $D$ that cannot appear in any $L_n$, and therefore $A \notin P^*(k)$. 
			 				\end{proof}
			 			
			 			\begin{example}\label{ex:nopositive row}
			 				It can be shown that the matrix in Example \ref{ex:smallk} is in $P^*(2,2)$, although it does not have a positive row.
			 				\end{example}
			 				
			 				\begin{proposition}\label{prop:P*}
			 					{Let $k, n \geq 1$.
			 						The nonnegative matrix $A$ is in $P^*(k,n)$ if and only if $A \in P(k,n)$ and for each $k$-tuple $(a_1, \dots, a_k) \in D^k$ there is a common predecessor $i \in D$ (meaning that $A(i,a_j)>0$ for all $j=1, \dots,k$).}
			 					\end{proposition}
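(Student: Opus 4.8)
The plan is to prove the two implications separately, unwinding the definitions so that $A \in P^*(k,n)$ reads ``$\mathcal A(i,\epsilon,L_n) = D^{L_n}$ for every $i \in D$'' while $A \in P(k,n)$ reads ``the arrival sets $\mathcal A(i,\epsilon,L_n)$ all coincide.'' The substantive content is then a reduction of completeness to fairness together with a purely local combinatorial condition on sibling $k$-tuples.

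For the forward implication, suppose $A \in P^*(k,n)$. Since every arrival set equals the full set $D^{L_n}$, in particular they are all equal, so $A \in P(k,n)$. To produce common predecessors, fix any $i \in D$ and any $k$-tuple $(a_1,\dots,a_k) \in D^k$. Choose one vertex $u \in L_{n-1}$ and define a labeling $\mu$ of $L_n$ by setting the children of $u$ to $\mu(ug)=a_g$ for $g=1,\dots,k$ and assigning arbitrary allowed symbols elsewhere; since $\mathcal A(i,\epsilon,L_n)=D^{L_n}$, this $\mu$ extends to a valid labeling $\lambda \in X_A$ with $\lambda(\epsilon)=i$. Reading off the label $b=\lambda(u)$, validity along the edges $(u,ug)$ forces $A(b,a_g)>0$ for all $g$, so $b$ is the desired common predecessor. (When $n=1$ the vertex $u$ is the root and one recovers $b=i$; the argument is unaffected.)

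For the converse, assume $A \in P(k,n)$ and that every $k$-tuple has a common predecessor, and let $\mu \in D^{L_n}$ be arbitrary. I would build a valid extension by a bottom-up greedy labeling: having labeled level $m+1$ (starting from $L_n$ with $\mu$), label each vertex $v \in L_m$ by a common predecessor of the $k$-tuple formed by the labels already assigned to its children $v1,\dots,vk$, which exists by hypothesis. Carrying this from $m=n-1$ down to $m=0$ yields a labeling of $\Delta_n$ that is valid along every edge between consecutive levels, hence $\mu \in \mathcal A(i_0,\epsilon,L_n)$ where $i_0$ is the resulting root label. Finally, fairness ($A \in P(k,n)$) gives $\mathcal A(i_0,\epsilon,L_n)=\mathcal A(i,\epsilon,L_n)$ for every $i$, so $\mu$ lies in every arrival set; as $\mu$ was arbitrary, each arrival set is all of $D^{L_n}$ and $A \in P^*(k,n)$.

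Both directions are short; the only steps needing care are checking that the bottom-up construction preserves validity on all edges (immediate, since each new label is a common predecessor of its children) and the use of fairness at the very end to upgrade ``realizable from some root'' to ``realizable from every root.'' I do not anticipate a genuine obstacle here: the conceptual point is simply that completeness factors as the global symmetry condition $P(k,n)$ plus the local surjectivity statement that every sibling $k$-tuple admits a parent.
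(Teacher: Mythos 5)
Your proof is correct and follows essentially the same route as the paper's: the converse uses the identical bottom-up labeling by common predecessors followed by an appeal to fairness, and the forward direction extracts a common predecessor by extending a target-row configuration and reading off a parent label. The only (cosmetic) difference is that you place the given $k$-tuple at the children of a single vertex $u \in L_{n-1}$ and fill the rest of $L_n$ arbitrarily, whereas the paper tiles all of $L_n$ with copies of the tuple; both work equally well since completeness makes every configuration on $L_n$ achievable.
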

			 					\begin{proof}
			 						{Suppose that every $k$-tuple $a=(a_1, \dots, a_k) \in D^k$ has a common predecessor $p(a) \in D$. 
			 						Given an arbitrary configuration $\xi \in D^{L_n}$ on $L_n$, 
			 						for each site $x \in L_{n-1}$ for the $k$-tuple of labels of the children of $x$ on $L_n$ by hypothesis there is a common predecessor $i(x) \in D$. 
			 						Label the sites $x \in L_{n-1}$ with these symbols $i(x)$, and then repeat to produce a labeling of $L_{n-2}$ such that the transitions from $L_{n-2}$ to $L_{n-1}$ are consistent with the matrix $A$.  
			 						Finally we arrive at a valid labeling of $\Delta_n$, which has some symbol $i$ at the root. 
			 						If $A \in P(k,n)$, then given any $j \in D$ there is a valid labeling $\lambda'$ of $\Delta_n$ such that $\lambda'|L_n = \xi$. }
			 						
			 						{Conversely, suppose that $A \in P^*(k,n)$ for some $k,n \geq 1$. 
			 						Given any $k$-tuple $(a_1, \dots, a_k) \in D^k$, 
			 						define a labeling $\xi$ of $L_n$ by putting $\xi(xi) = a_i$ for each $x \in L_{n-1}$ and $i=1, \dots, k$. 
			 						By hypothesis, given any $j \in D$ there is a labeling $\lambda$ of $\Delta_n$ such that $\lambda|L_n = \xi$. 
			 						Then for each $x \in L_{n-1}$, $\lambda(x)$ is a predecessor of each $a_i, i=1, \dots, k$.}
			 						\end{proof}
			 					
			 		{\section{Relations between starting symbols}\label{sec:relations}		
			 			For this section we fix $k \geq 2$.} 
			 			Given a transition matrix $A$, we want to determine whether or not $A$ is $(\epsilon, L_n)$-fair for some $n \geq 1$, that is, whether or not $A \in P(k,n)$. 
			 			We now propose a methodical procedure for settling this question. 
			 			First, we establish some convenient terminology and notation. 
			 			We denote by $\Delta_n^a$ the subtree of the full $k$-tree with the label $a \in D$ assigned to the root. 
			 			\begin{definition}\label{def:relation}
			 			The notation $i \Rightarrow_n j$ means that for every labeling of $\Delta_n$ by an alphabet $D=\{1, \dots, d\}$ with $i$ at the root there is a labeling of $\Delta_n$ with $j$ at the root such that the two labelings agree on the $n$'th row, $L_n$. The notation $i \Rightarrow j$, read ``$i$ can be replaced by $j$",  means that there is an $n$ such that $i \Rightarrow_n j$.
			 			\end{definition}
			 				Thus $i \Rightarrow_n j$ means that $\mathcal A(i, \epsilon, L_n) \subseteq \mathcal A(j,\epsilon,L_n)$. 
			 				Any relation $i \Rightarrow_n j$ can be {\em realized on $\Delta_n$}, in the sense that given a valid (according to $A$) labeling $\lambda$ of $\Delta_n$ with $i$ at the root, it is possible, by trial and error if necessary, to find a (valid) labeling $\lambda'$ of $\Delta_n$ that has $j$ at the root and agrees with $\lambda$ on $L_n$.
			 			
			 			\begin{definition}\label{def:follower}
			 			Define the {\em $n$-follower set} (allowed by the transition matrix $A$) of a symbol $a \in D$ to be
			 			\be
			 			\mathcal F_n(a)=\{\lambda|_{L_n}: \lambda \text{ is an allowed labeling of } \Delta_n^a\}.
			 			\en
			 			\end{definition}
			 			Then 
			 			\be
			 			a \Rightarrow_n b 		 
			 			\text{ if and only if } \mathcal F_n(a) \subseteq \mathcal F_n(b).
			 			\en

			 			\begin{proposition}\label{prop:stay}
			 				If $i \Rightarrow_N j$, then $i \Rightarrow_n j$ for all $n \geq N$.
			 			\end{proposition}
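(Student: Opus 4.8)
The plan is to exploit the fact that the relation $i \Rightarrow_N j$, being equivalent to $\mathcal F_N(i) \subseteq \mathcal F_N(j)$, furnishes a replacement of the labeling on the top $N$ levels of the tree that leaves the configuration on $L_N$ untouched. The underlying mechanism is then purely combinatorial grafting: since the labels on $L_N$ are preserved, whatever subtrees hang below $L_N$ in a taller labeling can be transplanted verbatim onto the replacement, and validity is automatic everywhere except possibly at the interface at level $N$, which is precisely where the agreement on $L_N$ saves us.

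Concretely, I would fix $n \ge N$ and start from an arbitrary allowed labeling $\lambda$ of $\Delta_n$ with $\lambda(\epsilon) = i$; the goal is to produce an allowed labeling $\lambda'$ of $\Delta_n$ with $\lambda'(\epsilon) = j$ and $\lambda'|_{L_n} = \lambda|_{L_n}$, which is exactly $\mathcal F_n(i) \subseteq \mathcal F_n(j)$, i.e. $i \Rightarrow_n j$. The restriction $\lambda|_{\Delta_N}$ is an allowed labeling of $\Delta_N$ with $i$ at the root, so its trace on $L_N$ lies in $\mathcal F_N(i) \subseteq \mathcal F_N(j)$ by hypothesis. Hence there is an allowed labeling $\mu$ of $\Delta_N$ with $\mu(\epsilon) = j$ and $\mu|_{L_N} = \lambda|_{L_N}$.

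I then define $\lambda'$ by gluing: set $\lambda' = \mu$ on $\Delta_N$ and $\lambda'(x) = \lambda(x)$ for every site $x$ at level greater than $N$ (equivalently, below each $w \in L_N$ I keep the subtree of $\lambda$ rooted at $w$). The key verification is that $\lambda'$ remains valid across the seam at level $N$: for $w \in L_N$ and a child $wg$ the required condition is $A(\lambda'(w), \lambda'(wg)) = 1$, that is $A(\mu(w), \lambda(wg)) = 1$; but $\mu(w) = \lambda(w)$ since $\mu$ and $\lambda$ agree on $L_N$, and $A(\lambda(w), \lambda(wg)) = 1$ because $\lambda$ was valid. All transitions strictly above level $N$ are inherited from $\mu$ and all transitions strictly below level $N$ are inherited from $\lambda$, so $\lambda'$ is a valid labeling, has $j$ at the root, and agrees with $\lambda$ on every level from $N$ downward, in particular on $L_n$. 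This yields $i \Rightarrow_n j$.

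I do not anticipate a genuine obstacle: the statement amounts to the observation that a replacement witnessed at height $N$ can be padded out by leaving the bottom of the tree fixed, and the only point demanding care is recording that the agreement of $\mu$ and $\lambda$ on $L_N$ is exactly what makes the spliced transitions admissible. If one prefers to avoid manipulating whole subtrees at once, the identical idea packaged as the single step $i \Rightarrow_n j \implies i \Rightarrow_{n+1} j$ (graft one extra row onto the agreeing $L_n$) followed by induction on $n \ge N$ works the same way, with the seam argument serving as the inductive step.
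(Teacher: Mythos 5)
Your proof is correct; note that the paper states Proposition \ref{prop:stay} without any proof at all, and your splice argument---replace the labeling on $\Delta_N$ by one with $j$ at the root agreeing on $L_N$, keep the subtrees below $L_N$, and check validity only at the seam, where agreement on $L_N$ makes the transitions $A(\mu(w),\lambda(wg))=A(\lambda(w),\lambda(wg))=1$ automatic---is exactly the routine grafting mechanism the authors leave implicit and use again in the proofs of Propositions \ref{prop:one} and \ref{prop:several}. Nothing is missing: your observation that $\lambda'$ then agrees with $\lambda$ on every level from $N$ down, in particular on $L_n$, completes the verification of $\mathcal F_n(i)\subseteq \mathcal F_n(j)$ for all $n\geq N$.
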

			 			If $i \Rightarrow_N j$, we say that the relation {\em can be realized on} $\Delta_N$. Then, by Proposition \ref{prop:stay}, it can also be realized on $\Delta_n$ for all $n \geq N$.
			 		
			 			Denote by $A_i$ row $i$ of $A$. 			 			
			 			If $A_i \leq A_j$ (entry by entry), then $i \Rightarrow_1 j$. 
			 			Beyond that, sometimes we can tell that $i \Rightarrow j$ by ``moving" an entry of $A_i$ to produce a new row $A_i^*$ that can be compared successfully to $A_j$.
			 			
			 			Suppose that $a,b \in D$, $a \Rightarrow b$, $A_i(a)=1$, and $A_j(a)$=0, so that it is not the case that $A_i \leq A_j$. 
			 			Then we define $s_{ab}$ and $A_i^*=s_{ab}A_i$ by
			 			\be
			 			A_i^*(m)=(s_{ab}A_i)(m)=
			 			\begin{cases}
			 				A_i(m) &\text{ if } m \notin \{a,b\}\\
			 				0 &\text{ if } m=a \\
			 				1 &\text{ if } m=b.
			 			\end{cases}
			 			\en
			 			
			 			\begin{proposition}\label{prop:one}
			 				Suppose that $a,b \in D$, $a \neq b$, $a \Rightarrow_n b$, $A_i(a)=1$, and $A_j(a)=0$.
			 				If $A_i^*=s_{ab}A_i \leq A_j$, then $i \Rightarrow_{n+1} j$.
			 			\end{proposition}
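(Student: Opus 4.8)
The plan is to use the follower-set characterization $i \Rightarrow_{n+1} j \iff \mathcal F_{n+1}(i) \subseteq \mathcal F_{n+1}(j)$ and to build, starting from an arbitrary allowed labeling of $\Delta_{n+1}^i$, an allowed labeling of $\Delta_{n+1}^j$ that agrees with it on $L_{n+1}$. First I would unpack the hypothesis $A_i^* \leq A_j$ coordinate by coordinate. Since $A_i^*(a)=0$, the coordinate $a$ imposes no constraint; since $A_i^*(b)=1$, we must have $A_j(b)=1$, i.e. $b \in S(j)$; and for every other $m \in S(i)\setminus\{a\}$ we get $A_i^*(m)=A_i(m)=1 \leq A_j(m)$, so $m \in S(j)$. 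Thus the entire content of the hypothesis is the pair of set inclusions $S(i)\setminus\{a\} \subseteq S(j)$ and $b \in S(j)$.

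Next I would record the standard decomposition of a labeling: an allowed labeling $\lambda$ of $\Delta_{n+1}^i$ is determined by the root label $i$, the labels $c_1,\dots,c_k \in S(i)$ placed at the $k$ child positions of the root, and, for each position $g$, an allowed labeling of the height-$n$ subtree $\Delta_n^{c_g}$ hanging below that child. The restriction $\lambda|_{L_{n+1}}$ is the concatenation, over the $k$ positions, of the restrictions of these subtree labelings to their own $n$-th rows.

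Then I would construct $\lambda'$ on $\Delta_{n+1}^j$ position by position, keeping every child position fixed so that agreement on $L_{n+1}$ is coordinatewise. Put $j$ at the root. If $c_g \neq a$, then $c_g \in S(i)\setminus\{a\} \subseteq S(j)$, so I keep the label $c_g$ and copy the subtree labeling verbatim; the edge $(j,c_g)$ is allowed and this block of $L_{n+1}$ is unchanged. If $c_g = a$, then the subtree below position $g$ is an allowed labeling of $\Delta_n^a$, so its $n$-th row lies in $\mathcal F_n(a)$; by the assumption $a \Rightarrow_n b$ we have $\mathcal F_n(a) \subseteq \mathcal F_n(b)$, so there is an allowed labeling of $\Delta_n^b$ with exactly that $n$-th row. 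I relabel this child $b$ (allowed because $b \in S(j)$) and install the $\Delta_n^b$-labeling beneath it. The resulting $\lambda'$ is allowed and agrees with $\lambda$ on $L_{n+1}$, whence $\lambda|_{L_{n+1}} \in \mathcal F_{n+1}(j)$ and therefore $i \Rightarrow_{n+1} j$.

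I do not expect a genuine obstacle here; the only point demanding care is the edge-validity bookkeeping at the root, namely confirming in both cases that the child label chosen for $\lambda'$ lies in $S(j)$, which is precisely what the translated hypothesis $S(i)\setminus\{a\} \subseteq S(j)$ and $b \in S(j)$ delivers. The role of the move operation $s_{ab}$ is exactly to certify that $a$ may be swapped for $b$ at the level of a single edge, with the deeper agreement on $L_{n+1}$ then furnished for free by the already-established relation $a \Rightarrow_n b$ one level below.
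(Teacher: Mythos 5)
Your proof is correct and follows essentially the same route as the paper's: copy each subtree rooted at a first-row label $c \neq a$ verbatim (valid since $S(i)\setminus\{a\} \subseteq S(j)$, which the paper extracts from $A_i^* \leq A_j$ in exactly the same way), and replace each subtree rooted at $a$ using $a \Rightarrow_n b$ together with $A_j(b)=1$. Your explicit unpacking of $A_i^* \leq A_j$ into the two set inclusions and the blockwise bookkeeping on $L_{n+1}$ are just slightly more formal renditions of the paper's argument, anticipating its own follower-set restatement in Proposition \ref{prop:severalgen}.
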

			 			\begin{proof}
			 				Suppose that $a \Rightarrow_n b$. 
			 				Consider an allowed labeling $\lambda$ of $\Delta_{n+1}^i$ (with $i$ at the root). 
			 				Suppose that $c \in D$ is a label found on $L_1$ in the labeling $\lambda$, so that $A_i(c)=1$. 
			 				Because $A_i^*(b)=1$ and $A_i^* \leq A_j$, 
			 				{so that $A_j(b)=1$},
			 				if $c \in D \setminus \{a\}$ then $A_i(c) \leq A_j(c)$, and hence also $A_j (c)=1$. 
			 				Thus $c$ is allowed to follow $j$ at the root, and the labeling of the subtree $\Delta_n^c$ of height $n$, of $\Delta_{n+1}^i$, which has $c$ at its root, can be copied over to the corresponding subtree of $\Delta_{n+1}^j$. 
			 				
			 				If the symbol $a$ is found on $L_1$ in the labeling $\lambda$, then $a \Rightarrow_n b$ implies that the restriction of the labeling $\lambda$ to the subtree $\Delta_n^a$ of $\Delta_{n+1}^i$ can be replaced on the corresponding subtree $\Delta_n^b$ of $\Delta_{n+1}^j$ by one that has $b$ at its root and agrees with $\lambda$ on $L_n$. 
			 				(We again used that $A_i^*(b)=1$.)
			 				
			 				In this way the entire labeling $\lambda$ is modified, so that $i$ at the root is changed to $j$, and each entry (if any) of $a$ on the first row is changed to $b$. In particular the labeling of $L_{n+1}$ is the same on both $\Delta_{n+1}^i$ and $\Delta_{n+1}^j$.
			 			\end{proof}
			 			
			 			{
			 			\begin{remark}\label{rem:neq0} 
			 				The conclusion of Proposition \ref{prop:one} 
			 				holds also without the assumption that $A_j(a)=0$ (since $A_j(a)=1$ together with the other assumptions gives $A_i \leq A_j$).
			 			\end{remark}}

			 			The process described in Proposition \ref{prop:one} can be repeated many times.
			 			\begin{proposition}\label{prop:several}
			 				Suppose that for $m=1, \dots, r$ we have $a_m,b_m \in D$ with $a_m \neq b_m$, $a_m \Rightarrow_n b_m$, $A_i(a_m)=1$, and $A_i(c) \leq A_j(c)$ for all $c \notin \{a_m: m=1, \dots, r\}$. 
			 				Define
			 				\be
			 				{ \hat A_i(c)=
			 					\begin{cases}
			 						A_i(c) \text{ if } c \notin \{a_m, b_m: m=1, \dots , r\}\\
			 						0      \text{ if } c=a_m \text{ for some } m=1, \dots , r\\
			 						1      \text{ if } c=b_m \text { for some } m=1, \dots , r.
			 					\end{cases}
			 				}
			 				\en
			 				If $\hat A_i \leq A_j$, then $i \Rightarrow_{n+1} j$.
			 			\end{proposition}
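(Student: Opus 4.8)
The plan is to mimic the argument for Proposition \ref{prop:one}, but to perform all $r$ substitutions simultaneously; this is possible precisely because the subtrees hanging below the distinct children of the root are disjoint and can be modified independently of one another. Concretely, to establish $i \Rightarrow_{n+1} j$ I would verify $\mathcal F_{n+1}(i) \subseteq \mathcal F_{n+1}(j)$ directly: fix an arbitrary allowed labeling $\lambda$ of $\Delta_{n+1}^i$ and construct from it an allowed labeling $\lambda'$ of $\Delta_{n+1}^j$ that agrees with $\lambda$ on $L_{n+1}$. I would set $\lambda'(\epsilon)=j$ and observe that the root has $k$ children, below each of which hangs a copy of a height-$n$ subtree; each such child together with its subtree is then treated separately.

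For a child $x$ on $L_1$ with label $c=\lambda(x)$ (so necessarily $A_i(c)=1$), there are two cases. If $c \notin \{a_1, \dots, a_r\}$, then by hypothesis $A_i(c) \leq A_j(c)$, whence $A_j(c)=1$, so $c$ is an allowed follower of $j$; I would keep the label $c$ at $x$ and copy the entire subtree $\Delta_n^c$ below $x$ verbatim into $\lambda'$, which then agrees with $\lambda$ on the portion of $L_{n+1}$ lying below $x$. If instead $c=a_m$ for some $m$, then the relation $a_m \Rightarrow_n b_m$ furnishes a labeling of $\Delta_n^{b_m}$ that has $b_m$ at its root and agrees with the restriction of $\lambda$ to $\Delta_n^{a_m}$ on the bottom row $L_n$; I would set $\lambda'(x)=b_m$ and install this replacement subtree. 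Here the label $b_m$ is legitimate because $\hat A_i(b_m)=1$ by definition and the assumption $\hat A_i \leq A_j$ forces $A_j(b_m)=1$.

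Performing these modifications at all $k$ children at once produces a valid labeling $\lambda'$ of $\Delta_{n+1}^j$ that coincides with $\lambda$ throughout $L_{n+1}$, so that $\lambda|_{L_{n+1}} \in \mathcal F_{n+1}(j)$. Since $\lambda$ was arbitrary, this gives $\mathcal F_{n+1}(i) \subseteq \mathcal F_{n+1}(j)$ and hence $i \Rightarrow_{n+1} j$. The role of the single composite hypothesis $\hat A_i \leq A_j$ is exactly to guarantee, in one inequality, that every child label appearing in $\lambda'$—whether an unchanged $c$ or a substituted $b_m$—is an allowed successor of $j$.

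I expect the only genuine subtlety to be the bookkeeping when the index sets $\{a_m\}$ and $\{b_m\}$ overlap or when some value $a_m$ is repeated: one must check that the case split ``$c$ is or is not some $a_m$'' is well defined and that the definition of $\hat A_i$ remains consistent (giving priority to the value $1$ on any symbol that is simultaneously some $b_m$). Once one notes that for the construction it suffices to select any single index $m$ with $c=a_m$ together with any valid witnessing relation $a_m \Rightarrow_n b_m$, this causes no real difficulty; the essential point is the independence of the subtree substitutions, which lets the single-step Proposition \ref{prop:one} be applied in parallel across the first row.
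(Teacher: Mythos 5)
Your proof is correct and follows essentially the same route as the paper's: copy subtrees under unchanged labels $c \notin \{a_m\}$ using $A_i(c) \leq A_j(c)$, and replace each subtree $\Delta_n^{a_m}$ by one rooted at $b_m$ via $a_m \Rightarrow_n b_m$, with $\hat A_i(b_m)=1$ and $\hat A_i \leq A_j$ guaranteeing $A_j(b_m)=1$; whether the substitutions are performed in parallel (as you do) or ``in turn, in whatever order'' (as the paper does) is immaterial since the subtrees are disjoint. Your remark that the value $1$ should take priority in defining $\hat A_i$ when some symbol is simultaneously an $a_m$ and a $b_{m'}$ matches the reading the paper's proof implicitly uses, so there is no gap.
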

			 			\begin{proof}
			 				If on row $1$ of a labeling $\lambda$ of $\Delta_{n+1}^i$ we have a symbol $c \in D \setminus \{a_m:m=1, \dots , r\}$, 
			 				since $A_i(c) \leq A_j(c)$ the symbol $c$ can also be written at the corresponding spot on $\Delta_{n+1}^j$ and the labeling of the subtree $\Delta_n^c$ of $\Delta_{n+1}^i$ can be carried over to the corresponding subtree of $\Delta_{n+1}^j$.
			 				
			 				Note that $\hat A_i(a_m)=0$ and $\hat A_i (b_m)=1$ for all $m=1, \dots ,r$.
			 				If on row $1$ of a labeling $\lambda$ of $\Delta_{n+1}^i$ we have a symbol $a_m$, since $a_m \Rightarrow_n b_m$ and 
			 				$\hat A_i \leq A_j$ implies
			 				$A_j(b_m)=1$, the labeling of the subtree $\Delta_n^{a_m}$ of $\Delta_{n+1}^i$ corresponds to a labeling of the corresponding subtree  $\Delta_{n}^{b_m}$ of $\Delta_{n+1}^j$ which agrees with the restriction of $\lambda$ to $L_{n+1}(\Delta_{n+1}^i)$. 
			 				Applying these relabelings for each $m=1, \dots ,r$ in turn, in whatever order, shows that $i \Rightarrow_{n+1} j$.
			 			\end{proof} 
			 			
			 			\begin{remark}
			 				We can think of starting with the vector $A_i$ and applying for each $m=1, \dots ,r$ in some order a ``move" that removes the entry $1$ from position $a_m$ and places it in position $b_m$. 
			 				The result is the vector $\hat A_i$. 
			 				Thus the ordering of the $(a_m, b_m)$ does not matter.
			 			\end{remark}
			 				With the notation of Definition \ref{def:follower} we can rewrite the proofs of Propositions \ref{prop:one} and Proposition \ref{prop:several}. 
			 			\begin{proposition} \label{prop:severalgen}. 
			 					Suppose that for $m=1, \dots ,r$ we are given $a_m,b_m \in D$ with  $\mathcal F_n(a_m) \subseteq \mathcal F_n(b_m)$ (i.e., $a_m \Rightarrow_n b_m$),  
			 				$A_i(c) \leq A_j(c)$ for all $c \notin \{a_m: m=1, \dots, r\}$, 
			 				{$A_i(a_m)=1$}
			 				and $A_j(b_m)=1$ (so that {$a_m \in S(i)$ and $b_m \in S(j)$).}
			 				Then $\mathcal F_{n+1}(i) \subseteq \mathcal F_{n+1}(j)$. 
			 			\end{proposition}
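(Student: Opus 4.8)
The plan is to prove the inclusion $\mathcal F_{n+1}(i) \subseteq \mathcal F_{n+1}(j)$ directly, by showing that every configuration on $L_{n+1}$ produced by an allowed labeling of $\Delta_{n+1}^i$ is also produced by some allowed labeling of $\Delta_{n+1}^j$. The first step is to record the recursive description of the follower sets: an allowed labeling of $\Delta_{n+1}^i$ amounts to a choice, for each of the $k$ children $g=1,\dots,k$ of the root, of a symbol $c_g \in S(i)$ together with an allowed labeling of the subtree $\Delta_n^{c_g}$ hanging below that child. Restricting to $L_{n+1}$ and identifying $L_{n+1}$ with the disjoint union of the $k$ copies of $L_n$ sitting below the children, each element $w \in \mathcal F_{n+1}(i)$ factors as $w=(w_1,\dots,w_k)$ with $w_g \in \mathcal F_n(c_g)$ for some $c_g \in S(i)$.

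Next I would treat the $k$ children independently, which is legitimate because once the root carries $j$ the only constraint on a child label is that it lie in $S(j)$, and the subtrees below distinct children impose no interaction with one another. For a fixed child $g$ there are two cases. If $c_g \notin \{a_1,\dots,a_r\}$, then $A_i(c_g)\leq A_j(c_g)$ together with $A_i(c_g)=1$ forces $A_j(c_g)=1$, so $c_g \in S(j)$; I keep the label $c_g$ and the same subtree labeling, and the contribution to $L_n$ is again $w_g$. If $c_g=a_m$ for some $m$, then $w_g \in \mathcal F_n(a_m) \subseteq \mathcal F_n(b_m)$ by the relation $a_m \Rightarrow_n b_m$, while $A_j(b_m)=1$ gives $b_m \in S(j)$; I replace the child label by $b_m$ and select an allowed labeling of $\Delta_n^{b_m}$ realizing $w_g$ on its own $L_n$, which exists precisely because $w_g \in \mathcal F_n(b_m)$.

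Assembling these choices over all $k$ children yields an allowed labeling of $\Delta_{n+1}^j$ whose restriction to $L_{n+1}$ equals $(w_1,\dots,w_k)=w$, so $w \in \mathcal F_{n+1}(j)$, which gives the desired inclusion.

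I do not anticipate a genuine obstacle here, since this statement is essentially Proposition \ref{prop:several} transcribed into the follower-set language of Definition \ref{def:follower}; the only points needing care are that the $a_m$ may be taken distinct, so that the replacement $a_m \mapsto b_m$ is unambiguous, and that the per-child modifications can be performed simultaneously without conflict. Both follow from the independence of the $k$ subtrees below the root once the root label has been fixed to $j$.
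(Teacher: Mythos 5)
Your proof is correct and is essentially the paper's own argument: both replace $i$ at the root by $j$, keep each child label $c \notin \{a_1,\dots,a_r\}$ (justified by $A_i(c) \leq A_j(c)$), swap each occurrence of $a_m$ on $L_1$ for $b_m$, and use $\mathcal F_n(a_m) \subseteq \mathcal F_n(b_m)$ together with $A_j(b_m)=1$ to relabel the subtree below while preserving its row $L_n$. Your version merely spells out the factorization of a configuration on $L_{n+1}$ into the $k$ blocks below the children and notes the independence of the subtrees, details the paper leaves implicit.
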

			 			\begin{proof}
			 				Let $\lambda$ be a labeling of $\Delta_{n+1}$ with $i$ at the root. 
			 				Replace $i$ at the root by $j$ and each entry (if any) of any $a_m$ on $L_1$ by the corresponding $b_m$.
			 				Label each $\Delta_n^{b_m}$ with an allowed labeling that agrees on its $L_n$ with $\lambda$ restricted to $L_n(\Delta_n^{a_m})$.
			 				Since $A_j(b_m)=1$ and
			 				$\mathcal F_n(a_m) \subset \mathcal F_n(b_m)$, the new labeling is a legal labeling of $\Delta_{n+1}$ with $j$ at the root which agrees on $L_{n+1}$ with $\lambda$. 
			 			\end{proof}	 			 
			 			
			 			\begin{remark} 			 
			 				If there aren't any $a_m$, we still conclude that $i \Rightarrow j$, since then $A_i \leq A_j$ (without using the assumption that all $A_j(b_m)=1$, which in the $A^*$ approach follows from $A_i^* \leq A_j$).
			 			\end{remark}

			 			Having applied Proposition \ref{prop:several}, we may have discovered new relations $i \Rightarrow j$ that were not known before. 
			 			If the original set of relations $a_m \Rightarrow b_m$ is now larger, we may apply Proposition \ref{prop:several} to the new set. 
			 			In the next section we describe a systematic procedure for building up the known set of relations, and 
			 			in Section \ref{sec:works} we will prove that {\em all} relations are found this way.

			 			\section{an algorithm for finding relations}\label{sec:procedure}
			 			We describe an algorithm for adding to a list of known relations. 
			 			\begin{definition}\label{def:degree}
			 			A relation $i \Rightarrow j$ is said to be of {\em degree $n$} if $n$ is the smallest $m$ such that $i \Rightarrow_m j$, i.e., the smallest $m$ such that for every allowed labeling $\lambda$ of $\Delta_m$ that has $i$ at the root there is a labeling of $\Delta_m$ that has $j$ at the root and agrees on the last row $L_m$ with $\lambda$. 
			 			We denote by $\mathcal D_n$ the set of relations of degree $n$. 
			 			\end{definition}
			 				The relations $i \Rightarrow i, i \in D$, are of degree $0$, and
			 				the relations $i \Rightarrow_1 j$, equivalent to $A_i \leq A_j$, are of degree $1$.

			 				The algorithm consists of a sequence of {\em rounds}, beginning with Round $0$.
			 					For each $n \geq 0$, we denote by $\mathcal K_n$ the set of relations that are {\em known at time $n$} (that is, after Round $n$), defined as follows. 
			 			 			We use an evolving $d \times d$ matrix $R_n$ to keep track of relations as they are found. 
			 			To begin, in Round $0$ set $R_0(i,j)=1$ if $i=j$, otherwise $R_0(i,j)=0$. 
			 			The relations $i \Rightarrow i, i \in D$, constitute $\mathcal K_0$ and are said to have {\em height} $0$.
			 			
			 			If $n \geq 0$ and at Round $n+1$ we find, by the process below, that $R_n(i,j)=0$ but $i \Rightarrow_{n+1} j$, then we set $R_{n+1}(i,j)=n+1$ and add the relation $i \Rightarrow_{n+1} j$ to our collection of known relations, otherwise we leave $R_{n+1}(i,j)=0$. 
			 			In this case we say that the relation $i \Rightarrow j$ is {\em discovered at Round $n+1$} and has {\em height} $n+1$.
			 						 			
			 		The following describes the action in the succession of rounds, having started with Round $0$ described above. 
			 			
			 			{Assume now that $n \geq 1$ and {certain} relations $a \Rightarrow_n b$ are known, i.e. we know the set $\mathcal K_n$.  
			 				We proceed to conduct Round $n+1$. 
			 				Begin with $\mathcal K_{n+1} = \mathcal K_n$.
			 				Fix $i,j \in D$, and suppose $(i \Rightarrow j) \notin \mathcal K_n$. 
			 				Check all $a \in S(i), b \in S(j)$, to see whether 
			 				{it is known that} $a \Rightarrow b$, i.e. whether or not $(a \Rightarrow b) \in \mathcal K_n$, equivalently, whether $R_n(a,b)=1$.

			 					Let $\{(a_m,b_m) \in D \times D: m=1, \dots,r\}$ denote the set of all $(a,b) \in D \times D$ for which $\mathcal F_n(a_m) \subseteq \mathcal F_n(b_m)$ (i.e., $a_m \Rightarrow_n b_m$),  
			 				$A_i(c) \leq A_j(c)$ for all $c \notin \{a_m: m=1, \dots, r\}$, 
			 				{$A_(a_m)=1$,}
			 				and $A_j(b_m)=1$ (so that {$a_m \in S(i)$ and} $b_m \in S(j)$).
			 				Then by Proposition \ref{prop:severalgen} or \ref{prop:several},			 				
			 				$\mathcal F_{n+1}(i) \subseteq \mathcal F_{n+1}(j)$, 
			 				so we add the relation $(i \Rightarrow j)$ to $\mathcal K_{n+1}$  
			 				and set $R_{n+1}(i,j)=n+1$. 
			 			
			 				Thus in Round $1$ we set $R_1(i,j)=1$ if and only if $A_i \leq A_j$, i.e. $i \Rightarrow_1 j$. 
			 				
			 					{The following Proposition summarizes the preceding results in a way that is especially convenient for implementing the procedure by computer.
			 				\begin{proposition}\label{prop:sum}
			 					Let $n \geq 0$.
			 					The relation $i \Rightarrow_{n+1} j$ is known after Round $n+1$ if 
			 				for every $a \in S(i)$ there is $b \in S(j)$ such that $R_n(a,b)>0$. 
			 				The converse holds if the tree dimension 
			 				$k \geq s_A =\max_i |A_i|$. 
			 				\end{proposition}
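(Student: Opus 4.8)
The plan is to separate the two implications and to read the proposition semantically: the ``if'' direction asserts that the matching condition forces the genuine relation $i\Rightarrow_{n+1}j$ (so the algorithm records it via Proposition \ref{prop:several}), while the substantive ``only if'' direction asserts that, when $k\ge s_A$, every genuine relation $i\Rightarrow_{n+1}j$ must arise from such a matching; combined with the inductive completeness of $R_n$ through Round $n$, this is what lets the algorithm certify it. The engine for both halves is the product description of follower sets (Definition \ref{def:follower}): identifying $L_{m+1}$ with the $k$ copies of $L_m$ lying beneath the children of the root, a labeling of $\Delta_{m+1}^a$ is exactly an independent choice, on each child, of a successor $c\in S(a)$ together with a labeling of $\Delta_m^c$, so that
\[
\mathcal F_{m+1}(a)=\Big(\bigcup_{c\in S(a)}\mathcal F_m(c)\Big)^{k}.
\]
Writing $\mathcal G_a=\bigcup_{c\in S(a)}\mathcal F_n(c)$, this gives $\mathcal F_{n+1}(a)=\mathcal G_a^{\,k}$; and since $(x,\dots,x)\in X^k$ for every $x\in X$, one has $X^k\subseteq Y^k$ if and only if $X\subseteq Y$.

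For the forward direction I would assume that each $a\in S(i)$ has some $b\in S(j)$ with $R_n(a,b)>0$, i.e. $a\Rightarrow_n b$, so $\mathcal F_n(a)\subseteq\mathcal F_n(b)$. Then $\mathcal G_i=\bigcup_{a\in S(i)}\mathcal F_n(a)\subseteq\bigcup_{b\in S(j)}\mathcal F_n(b)=\mathcal G_j$, whence $\mathcal F_{n+1}(i)=\mathcal G_i^{\,k}\subseteq\mathcal G_j^{\,k}=\mathcal F_{n+1}(j)$, that is $i\Rightarrow_{n+1}j$. This is precisely the configuration of Proposition \ref{prop:several}: take the matched pairs as the moves $(a_m,b_m)$, leaving the common successors $c\in S(i)\cap S(j)$ fixed (these are matched by $c\Rightarrow_0 c$). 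Hence the algorithm sets $R_{n+1}(i,j)=n+1$ at Round $n+1$.

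The converse is where the dimension hypothesis is essential. Assume $k\ge s_A$ and that $i\Rightarrow_{n+1}j$ holds. Collapsing the outer $k$-th powers turns $\mathcal F_{n+1}(i)\subseteq\mathcal F_{n+1}(j)$ into the union containment $\mathcal G_i\subseteq\mathcal G_j$, so for a fixed $a\in S(i)$ we obtain only $\mathcal F_n(a)\subseteq\bigcup_{b\in S(j)}\mathcal F_n(b)$. The real work is to upgrade this union containment to a single containment $\mathcal F_n(a)\subseteq\mathcal F_n(b)$, and I would do so by exploiting the product structure \emph{one level down}. For $n\ge1$, writing $\mathcal G'_x=\bigcup_{c\in S(x)}\mathcal F_{n-1}(c)$, we have $\mathcal F_n(a)=(\mathcal G'_a)^{k}$ and $\mathcal F_n(b)=(\mathcal G'_b)^{k}$, so the containment reads $(\mathcal G'_a)^{k}\subseteq\bigcup_{b\in S(j)}(\mathcal G'_b)^{k}$. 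Suppose no single $b$ worked. List $S(j)=\{b_1,\dots,b_t\}$; since $t=|A_j|\le s_A\le k$, choose for each $\ell$ a pattern $x_\ell\in\mathcal G'_a\setminus\mathcal G'_{b_\ell}$ and form the tuple whose $\ell$-th coordinate is $x_\ell$ for $\ell\le t$ and any fixed element of $\mathcal G'_a$ otherwise. This tuple lies in $(\mathcal G'_a)^{k}$ but, having its $\ell$-th coordinate outside $\mathcal G'_{b_\ell}$, lies in no $(\mathcal G'_{b_\ell})^{k}$, a contradiction. Hence $\mathcal G'_a\subseteq\mathcal G'_b$ for some $b$, i.e. $\mathcal F_n(a)\subseteq\mathcal F_n(b)$ and $a\Rightarrow_n b$; by the inductive completeness of $R_n$ this means $R_n(a,b)>0$. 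The level $n=0$ is immediate, since $\mathcal F_0(a)=\{a\}$ forces $a\in S(j)$ directly, and the degenerate cases with an empty follower set are trivial.

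The main obstacle, visible in the argument, is exactly the passage from union containment to single containment, which is false for arbitrary sets and is the reason the hypothesis cannot be dropped (confirming Example \ref{ex:smallk}). What rescues it is that follower sets are not arbitrary but are $k$-th Cartesian powers, so a diagonal choice across the successors of $j$ produces a witnessing tuple — and this succeeds precisely when there are at least as many tree-coordinates as successors to be avoided, i.e. when $k\ge|S(j)|$, which is guaranteed by $k\ge s_A$.
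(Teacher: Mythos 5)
Your proof is correct, and while your forward direction coincides with the paper's (both reduce to Propositions \ref{prop:several}/\ref{prop:severalgen} plus the algorithm's recording rule), your converse takes a genuinely different route. The paper proves the converse by citing Proposition \ref{prop:n}(1), whose proof runs through the full-labeling machinery: a full labeling of $\Delta^i_{n+1}$ (which needs $k \geq s_A$ to exist), the matched labeling of $\Delta^j_{n+1}$ agreeing on $L_{n+1}$, and Lemmas \ref{lem:switch} and \ref{lem:same} showing that every labeling of the subtree $\Delta_n^a$ arises from the restricted full labeling by Switch/Replace moves that preserve last-row agreement. You instead observe the unconditional product identity $\mathcal F_{m+1}(a)=\bigl(\bigcup_{c\in S(a)}\mathcal F_m(c)\bigr)^{k}$, collapse $i\Rightarrow_{n+1}j$ to the union containment $\mathcal F_n(a)\subseteq\bigcup_{b\in S(j)}\mathcal F_n(b)$, and upgrade it to a single containment by a diagonal argument: a nonempty $k$-th power contained in a union of $t\leq k$ $k$-th powers must lie in one of them. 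This is purely set-theoretic, bypasses the Switch/Replace lemmas entirely, makes the role of the dimension hypothesis completely transparent (it is exactly $k\geq |S(j)|$ that permits the witnessing tuple, consistent with Example \ref{ex:smallk}, where $t=3>k=2$), and in fact yields a slight sharpening: for a given pair $(i,j)$ the converse needs only $k\geq |A_j|$ rather than $k\geq s_A$. Two small caveats. First, your dismissal of empty follower sets is legitimate only under the paper's implicit standing assumption that every symbol has at least one allowed follower; with a zero row the relation $i\Rightarrow_{n+1}j$ can hold vacuously while the matching condition fails, but the paper's own proof presupposes the same nondegeneracy. Second, your appeal to the ``inductive completeness of $R_n$'' (every relation of degree at most $n$ has a positive $R_n$-entry) is precisely the induction of Theorem \ref{thm:disc}; the paper's proof likewise defers this step, so your argument should be read as an alternative proof of the semantic statement in Proposition \ref{prop:n}(1), with the passage from $a\Rightarrow_n b$ to $R_n(a,b)>0$ supplied by that joint induction --- organized this way (the converse at level $n+1$ using completeness at level $n$, which the forward direction and earlier levels furnish), there is no circularity.
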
}
			 				\begin{proof}
			 				{Suppose that $S(i)=\{a_1,\dots,a_p\}$} and for every $m=1, \dots, p$ there is $b_m \in S(j)$ such that $R_n(a_m,b_m)>0$. 
			 					Then $a_m \Rightarrow_n b_m$, $a_m \in S(i)$, and $b_m \in S(j)$ for all $m$.
			 					And for all $c \notin S(i)$, $0 = A_i(c) \leq A_j(c)$. 
			 					Then by Proposition \ref{prop:severalgen} or \ref{prop:several}, $i \Rightarrow_{n+1}j$.
			 					
			 					{For the converse, see Proposition \ref{prop:n} (1).}
			 					\end{proof}

			 					\begin{remark}\label{rem:suff}
			 					{Propositions \ref{prop:several} and \ref{prop:severalgen} show that if a relation $i \Rightarrow j$ is discovered on round $n+1$, then it can be realized on $\Delta_{n+1}$ (meaning that $i \Rightarrow_{n+1} j$, i.e., for any labeling of $\Delta_{n+1}$ with $i$ at the root there is a labeling of $\Delta_{n+1}$ with $j$ at the root such that the two labelings agree on $L_{n+1}$).}
			 					{In other words, the height of any relation is greater than or equal to its degree.}
			 					\end{remark}
			 					The following examples show how the algorithm is carried out in practice, by a computer.
			 			
			 			\begin{example}\label{ex:ex1}

			 				We apply the algorithm to the matrix $A=[110|001|100]$ that specifies the allowed transitions for labeling the $2$-tree by an alphabet of $3$ symbols. 
			 				Note that that the tree dimension $k=2$ is greater than or equal {to} the maximum row sum of the transition matrix, $s_A=2$.
			 				
			 				\noindent
			 				Round 0: Set $R_0=I=[100|010|001]$.
			 				
			 				\noindent
			 				{Round 1:} Look for basic relations: $A_i\leq A_j$. 
			 				We see that $i=3$ and $j=1$ are the only
			 				pair satisfying this relation. So update the relation matrix to
			 				$R_1=[100|010|101]$. 
			 				Check that 
			 				some newly discovered relations were added at this round. If so continue,
			 				otherwise stop.
			 				
			 				{ Note:} $3 \Rightarrow 1$ has height $1$.
			 				This means that
			 				the replacement of $3$ by $1$ can be realized on $\Delta_1$ (but not on $\Delta_0$).  
			 				Also, since it cannot be realized on $\Delta_0$, this relation has degree $1$.
			 				
			 				\noindent
			 				{Round 2:}	 Check to see if we can find some $A_i^* \leq A_j$. 
			 				If so,
			 				we can find such $A_i^*$ then we update $R(i,j)=1$. 
			 				Here we have only
			 				$R_1(3,1)=1$.
			 				Then $s_{31}(A_2)=s_{31}([001])=[100]=A_2^*$. 
			 				Thus, we have
			 				$A_2^*\leq A_1$ and $A_2^*\leq A_3$, so that 
			 				$2 \Rightarrow 1$ and $2 \Rightarrow 3$. 
			 				These are the only two new relations that can be discovered using our current $R$. 			 				
			 			Update the relation matrix to
			 				$R_2=[100|212|101]$. 
			 					Check that 
			 				some newly discovered relations were added at this round. If so continue,
			 				otherwise stop.
			 						 				
			 				{ Note:} The two added relations ($2 \Rightarrow 1, 2 \Rightarrow 3$) have height and degree $2$. 
			 				
			 				\noindent
			 				{Round 3:} For each $i,j$ such that $R_2(i,j)=0$ (we use only relations
			 				provided by $R_2$, relations newly discovered relations in this round 
			 				cannot be used in this round), check to see if we can find some $A_i^*$
			 				such that $A_i^*\leq A_j$. 
			 				Here, we can
			 				discover only one relation. 
			 				So we use $s_{21}(A_1)=s_{21}([110])=[100]=A_1^*$, and
			 				we get $A_1^*\leq A_3$, yielding $1 \Rightarrow 3$. 
			 				Hence we update the relation matrix to
			 				$R_3=[103|212|101]$. 
			 					Check that 
			 				some newly discovered relations were added at this round. If so continue,
			 				otherwise stop.
			 				
			 				{Note:} The newly added relation ($1 \Rightarrow 3$) is of height and degree $3$. We used
			 				a relation of degree $2$ ($2 \Rightarrow 1$) to reach this conclusion.
			 				
			 				\noindent
			 				{Round 4:} We have $R_3=[103|212|101]$.
			 				 For each $i,j$ such that $R_3(i,j)=0$, using only relations
			 				provided by $R_3$, check to see if we can find some $A_i^*$ with $A_i^*\leq A_j$.
			 				Here we have two cases left.\\
			 				(1) For $R_3(1,2)=0$, we use
			 				$s_{13}s_{23}(A_1)=s_{13}s_{23}([110])=s_{13}([101])=[001]=A_1^*$, and $A_1^*\leq A_2$,
			 				hence we conclude $1 \Rightarrow 2$. \\
			 			(2) For $R_3(3,2)=0$: $s_{13}(A_3)=s_{13}([100])=[001]=A_3^*$,
			 				and $A_3^*\leq A_2$ , so we conclude $3 \Rightarrow 2$.
			 				
			 				Update the relation matrix to
			 				$R_4=[143|212|141]$.
			 					Check that 
			 				some newly discovered relations were added at this round. If so continue,
			 				otherwise stop.
			 				
			 				{ Note:} Both relations added in Round 4 are of height and degree $4$ (we used a relation of degree $3$ 
			 				($1 \Rightarrow 3$) to reach this conclusion).
			 				
			 				\noindent
			 				{Round 5:} In Round five no relations can be added,
			 				{since $R_4$ is a positive matrix.} Therefore the computations must stop.
			 				
			 				{Note:} The final matrix is $R_4=[143|212|141]$. 
			 				Ignoring
			 				diagonal elements, for each $i,j$ the entry $R_4(i,j)$ is the degree of the relation $i \Rightarrow j$. 
			 				For example,
			 				$R_4(3,1)=1$, indicating that $3 \Rightarrow 1$ is of degree $1$, and $R_4(1,2)=4$ indicating
			 				that $1 \Rightarrow 2$ is of degree $4$.
			 				
			 				{Note:} The algorithm yields the conclusion that $A\in P(k,n)$ for $k\geq 2$ and $n\geq 4$.
			 				\end{example}
			 			
			 				\begin{example}\label{ex:ex2}
			 				Now consider the transition matrix  $A=[1001|1000|0100|0010]$.

			 				\noindent
			 				{Round 0:} Set $R_0=I=[1000|0100|0010|0001]$.
			 				
			 				\noindent
			 				{Round 1:} For all $i,j$ with $R_0(i,j)=0$, check if $A_i\leq A_j$.
			 				Here, we have only one case, $A_2\leq A_1$,
			 				hence, update the relation matrix to 
			 				$R_1=[1000|1100|0010|0001]$. Since a relation was added, continue computations.
			 				
			 				\noindent
			 				{Round 2:} For all $i,j$ such that $R_1(i,j)=0$, using relations as provided
			 				by $R_1$ (only), check to see if we can find $A_i^*$ such that $A_i^*\leq A_j$.
			 			We can find two cases: $s_{21}(A_3)=s_{21}([0100])
			 				=[1000]=A_3^*$, so that $A_3^*\leq A_1$ and $A_3^*\leq A_2$. 
			 				 we update
			 				 the relation matrix to 
			 				$R_2=[1000|1100|2210|0001]$. 
			 				Since some relations were added, continue computations.
			 				
			 				\noindent
			 				{Round 3:} For all $i,j$ such that $R_2(i,j)=0$, using relations as
			 				provided by $R_2$ (only), check to see if we can find $A_i^*$ such
			 				that $A_i^*\leq A_j$. 
			 				 Here, we have\\
			 			(1) 	$s_{31}(A_4)=s_{31}([0010])=[1000]=A_4^{*(1)}$, and $A_4^{*(1)}\leq
			 				A_1,A_2$, so $4 \Rightarrow 1$ and $4 \Rightarrow 2$.\\
			 		(2)	We also have
			 				$s_{32}(A_4)=s_{32}([0010])=[0100]=A_4^{*(2)}$, and $A_4^{*(2)}\leq A_3$, so
			 				$4 \Rightarrow 3$.
			 				  Update the relation matrix to
			 				$R_3=[1000|1100|2210|3331]$. Since some relations were added, continue computations.
			 			
			 				\noindent
			 				{Round 4:} For all $i,j$ such that $R_3(i,j)=0$, using relations 
			 				provided by $R_3$ (only), check to see if we can find $A_i^*$ such
			 				that $A_i^*\leq A_j$. 
			 				Here, we have 
			 				only one case: $s_{41}(A_1)=s_{41}([1001])=[1000]=A_1^*$, and
			 				$A_1^*\leq A_2$, showing that $1 \Rightarrow 2$. 
			 				Hence we 
			 				update the relation matrix to
			 				$R_4=[1400|1100|2210|3331]$. Since a relation was added, continue computations.
			 				
			 				\noindent
			 				{Round 5:} For all $i,j$ such that $R_4(i,j)=0$, using relations
			 				as provided by $R_4$ (only), check to see if we can find $A_i^*$ such that
			 				$A_i^*\leq A_J$. 
			 				Here we have two cases:\\
			 			(1)	$s_{12}s_{42}(A_1)=s_{12}([1100])=[0100]=A_1^*$, $A_1^*\leq A_3$, so $1 \Rightarrow 3$.\\
			 				(2) Also, $s_{12}(A_2)=s_{12}([1000])=[0100]=A_2^*$,
			 				$A_2^*\leq A_3$, so $2 \rightarrow 3$.
			 				Update the relation matrix to
			 				$R_5=[1450|1150|2210|3331]$. Since some relations were added, continue computations.
			 				
			 				\noindent
			 				{Round 6:} For all $i,j$ such that $R_5(i,j)=0$, using relations
			 				as provided by $R_5$ (only), check to see if we can find $A_i^*$
			 				such that $A_i^*\leq A_j$. 
			 				 Here, we
			 				have three cases: \\
			 				(1) $s_{13}s_{43}(A_1)=r_{13}s_{43}([1001])=s_{13}([1010])=[0010]=A_1^*$,
			 				and $A_1^*\leq A_4$, hence update $1 \Rightarrow 4$. \\
			 				(2) Also, $s_{13}(A_2)=r_{13}([1000])=
			 				[0010]=A_2^*$, and $A_2^*\leq A_4$, hence update $2 \Rightarrow 4$.\\
			 				(3) Finally, $s_{23}(A_3)=
			 				s_{23}([0100])=[0010]=A_3^*$, and $A_3^*\leq A_4$, so update $3 \Rightarrow 4$.
			 				
			 				 Update the relation matrix to
			 				$R_6=[1456|1156|2216|3331]$. Since some relations were added, continue computations.
			 				
			 				\noindent
			 				{Round 7:} For all $i,j$ such that $R_6(i,j)=0$, using relations as
			 				provided by $R_6$ (only), 
			 				{check to see if we can find $A_i^*$ and $A_j$ such that $A_i^* \leq A_j$,} but we have no such $i,j$.
			 				 Since no relations can be added, stop computations.
			 				
			 				{ Note:} The degree matrix is given by
			 				\be
			 				R_6=[1456|1156|2216|3331].
			 				\en
			 				{ Conclusion:} 
			 				$A\in P(2,6)$, that is, $A\in P(k,n), k\geq 2, n\geq 6$.
			 			\end{example}
			 			\begin{example}\label{ex:ex3}
			 				Now we investigate the transition matrix $A=[0111|1000|0100|0010]$.
			 				
			 				{Round 0:} Set $R_0=I$.
			 				
			 				\noindent
			 				{Round 1:} For all $i,j$ such that $R_0(i,j)=0$, check to see if
			 				$A_i\leq A_j$. Here, we have two cases:
			 				$A_3\leq A_1$ and $A_4\leq A_1$. We update $3 \Rightarrow 1$ and $4 \Rightarrow 1$.
			 				
			 				 Update the relation matrix to
			 				$R_1=[1000|0100|1010|1001]$. Since some relations were added, continue computations.
			 				
			 				\noindent
			 				{ Round 2:} For all $i,j$ such that $R_1(i,j)=0$, check to see if
			 				we can find some $A_i^*$ such that $A_i^*\leq A_j$. 
			 			s We have (only) one case: $r_{31}(A_4)=r_{s1}([0010])
			 				=[1000]=A_a^*$, and $A_4^*\leq A_2$. Hence, update $4 \Rightarrow 2$.
			 				
			 				{Update $R_2=[1000|0100|1010|1201]$.}
			 				Since a relation was added, continue computations.
			 				
			 				\noindent
			 				{Round 3:} For all $i,j$ such that $R_2(i,j)=0$, check to see 
			 				if we can find $A_i^*$ such that $A_i^*\leq A_j$. We have no such case.
			 				 Since no relations are added,
			 			 stop computations.
			 				
			 				{Conclusion: $A \notin P(3,2)$; hence for all $k \geq 3$, $A \notin P(k,2)$. 
			 				By Theorem \ref{thm:disc}, for every $k \geq 3, n \geq 1$, we have $A \notin P(k,n)$. 
			 					But $A \in P(1,7)$, since $A^7>0$. ($A$ is primitive.)}
			 				
			 				\end{example}

			 			\section{The algorithm works}\label{sec:works}
			 			{Let $s_A$ denote the maximum row sum of the transition matrix $A$:
			 				\be
			 				s_A= \max \{|A_i|: i=1 , \dots, d\}.
			 				\en }
			 				We aim to show that if $k \geq s_A$, then the algorithm described above will actually produce all relations that exist among the symbols in $D$. 
			 				{\begin{definition}\label{def:full}
			 						A labeling $\lambda$ of any subtree $\tau$ of the $k$-tree is {\em full} if for each site $x \in \tau$ all allowed followers of the label assigned to $x$ by $\lambda$ appear among the labels of the successors of $x$ in the tree:
			 						\be
			 						\{\lambda(xg): g \in K\} = S(\lambda(x)).
			 						\en
			 					\end{definition}
			 				}
			 				{We will show first that} every labeling of the tree results by ``reducing" a full labeling and keeping track of the configurations on the target row, $L_{n}$

			 				We define two operations, {\em Switch} $S$ and {\em Replace} $R$ on a labeling $\Lambda$ of a finite tree $\Delta_n^i$ of height $n$ with $i$ at the root (on line $L_0$). 
			 						 				
			 				Assume that $1 \leq r \leq n$ and $x,y \in L_r$ with $x=zu$ and $y=zv$ for a site $z \in L_{r-1}$, some $u,v \in K$. 
			 				Then we define the {\em Replacement} $\tilde \Lambda=R(x,y)(\Lambda)$ of the labeled subtree under $x$ by the labeled subtree under $y$ by defining $\tilde \Lambda(xw)=\Lambda(yw)$ for every word $w$ on $K$ with $0 \leq |w| \leq n-|x|$, and $\tilde\Lambda(t)=\Lambda(t)$ for all other $t \in \Delta_n$.
			 				
			 				We define the {\em Switch} $\overline \Lambda=S(x,y) (\Lambda)$ of the labeled subtrees under $x$ and $y$ by 
			 				$\overline \Lambda(xw)=\Lambda(yw)$ and $\overline \Lambda(yw)=\Lambda(xw)$ 
			 				for every word $w$ on $K$ with $0 \leq |w| \leq n-|x|$, and $\overline \Lambda(t)=\Lambda(t)$ for all other $t \in \Delta_n$.
			 				
			 				Note that all this is legal because $\Lambda(x)$ and $\Lambda(y)$ are both allowed followers of $\Lambda(z)$ according to the transition matrix $A$.
			 				
			 				\begin{lemma}\label{lem:switch}
			 					Assume that $k \geq s_A$. 
			 					Then {\em every} labeling $\Lambda$ of $\Delta_n^i$ results from a {\em full} labeling $\Lambda^*$ of $\Delta_n^i$ by applying a sequence of replacements followed by a sequence of switches.
			 				\end{lemma}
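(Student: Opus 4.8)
The plan is to prove, by induction on the height $n$, a statement slightly stronger than the lemma: for every symbol $c \in D$, \emph{every} full labeling $W$ of $\Delta_n^c$ reduces to \emph{every} valid labeling $T$ of $\Delta_n^c$ by a sequence of replacements followed by a sequence of switches. The lemma itself then follows at once, since the hypothesis $k \geq s_A$ guarantees that at least one full labeling of $\Delta_n^i$ exists (at each site there are $|S(\cdot)| \le s_A \le k$ followers to be placed among the $k$ children, the leftover children carrying repeats), and by the strengthened statement that full labeling reduces to the given $\Lambda$.

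The base case $n = 0$ is trivial, as both labelings are a single symbol. For the inductive step I would fix a full $W$ and a target $T$ on $\Delta_n^c$, writing $c_1,\dots,c_k \in S(c)$ for the symbols appearing on $L_1$ of $T$ and $T_1,\dots,T_k$ for the height-$(n-1)$ subtrees hanging beneath them. Because $W$ is full, the symbols on its own first row exhaust $S(c)$, so each $c_g$ already occurs as the root of one of the full subtrees of $W$. The construction then proceeds in three layers, arranged so that all replacements precede all switches. First, using only replacements among the $k$ children of the root, I would copy the full subtrees of $W$ until the multiset of root symbols on $L_1$ equals exactly $\{c_1,\dots,c_k\}$; this is a routine finite copying argument, valid because the initial symbols cover $S(c) \supseteq \{c_1,\dots,c_k\}$ and because replacing one sibling subtree by a copy of another preserves fullness. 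Each child of the root now carries a full subtree with the required root $c_g$ (in some order). Second, by the inductive hypothesis each such full subtree reduces to the corresponding $T_g$ by replacements-then-switches; since these act on disjoint subtrees, I can perform all of their replacement-steps first and all of their switch-steps afterward. Third, a final sequence of switches among the children of the root permutes the finished subtrees into their correct positions. Concatenating, the global sequence is (top replacements)(subtree replacements)(subtree switches)(top switches), which is precisely all replacements followed by all switches.

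The main obstacle is the bookkeeping that allows the replacements to be grouped strictly ahead of the switches, together with the recognition that the \emph{stronger, ``universal''} statement is what makes the induction close. Full labelings with the same root need not be unique up to switches, so if one asked only for \emph{some} full $\Lambda^*$ per target, two children of the root carrying distinct subtrees with the same root symbol could not both be obtained from a single copied full subtree; proving that \emph{every} full labeling reduces to \emph{every} target sidesteps this entirely, since after the top-level copying each child carries a full subtree to which the inductive hypothesis applies. The sole quantitative input is $k \ge s_A$, used in the first layer to ensure that the first row of a full labeling covers all of $S(c)$, so that the copying step has available every symbol it needs.
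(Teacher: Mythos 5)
Your proof is correct, and it proves exactly what the paper needs: note that the paper's own proof also fixes an \emph{arbitrary} full labeling $\Lambda^*$ alongside the arbitrary $\Lambda$, so your ``universal'' strengthening is the right reading of the lemma rather than an added generality --- indeed it is the form invoked in the proof of Proposition \ref{prop:n}, where the full labeling is prescribed. The difference from the paper is organizational rather than mechanical. The paper argues top-down, iterating over rows $r=1,\dots,n$: at each row it first normalizes $\Lambda^*(L_r)$ by replacements (each symbol once, one symbol absorbing the excess), then uses replacements to equalize symbol counts with $\Lambda(L_r)$, then switches to put symbols in the correct positions, and then ``repeats'' at the next row. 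You instead induct on the height: top-level replacements equalize the multiset of first-row symbols (your surplus/deficit copying is the same count-equalizing step, and in both proofs $k \geq s_A$ enters through fullness forcing the first row to cover $S(c)$), the inductive hypothesis handles the now-full sibling subtrees, and the top-level switches are deferred to the end. Your version buys two things: the paper's informal ``Repeat for $r=2,\dots,n$'' becomes a genuine closed induction, and your ordering (top replacements, subtree replacements, subtree switches, top switches) literally yields all replacements before all switches, as the lemma asserts, whereas the paper's row-by-row procedure interleaves the two kinds of moves across rows and gives the stated ordering only after a harmless but unremarked commutation of operations acting at different levels --- immaterial to the application, since Lemma \ref{lem:same} is applied one operation at a time, but your argument matches the statement as written.
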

			 				\begin{proof} 
			 					Let a labeling $\Lambda$ and a full labeling $\Lambda^*$ of $\Delta_n^i$ be given. 
			 					The $k$ sites on $L_1$ are ordered as $x_1, \dots, x_k$ according to the natural ordering of $K=\{1, \dots, k\}$ (and similarly the sites on each $L_r$ are ordered according to the lexicographic ordering on the set of strings on $K$ that define them). 
			 					So we regard $\Lambda^*(L_1)$ and $\Lambda(L_1)$ as strings on $D$.
			 					
			 					Reducing the alphabet if necessary, we may assume that all symbols of $D$ appear in $\Lambda(L_1)$ and $\Lambda^*(L_1)$. (Use Replace to move any subtrees under extra symbols in $\Lambda^*(L_1)$ (that do not appear in $\Lambda(L_1)$) to subtrees under any symbol that appears in $\Lambda(L_1)$.)
			 					Applying replacements, we may also assume that each symbol $a \in D$ appears exactly once in the string $\Lambda^*(L_1)$, except that $a=1$ appears $m=k-d+1$ times (the required rest of the number of times): 
			 					$|\{r \in K:\Lambda^*(x_r) = a\}|=1$ for all $a \neq 1$.
			 					
			 					Then we can use Replace to redistribute the subtrees under the symbol $1$ on $L_1$ so that the strings $\Lambda^*(L_1)$ and $\Lambda(L_1)$ have the same symbol counts: for every $a \in D$, $|\{r \in K: 	\Lambda^*(x_r)=a\}| = |\{r\in K: \Lambda(x_r)=a\}|$.	
			 					
			 					Then apply Switch enough times to the current image of $\Lambda^*(L_1)$ to make it identical to $\Lambda(L_1)$.	 					
			 					
			 					Repeat for	
			 					$r=2, \dots, n$ in turn to obtain the image of $\Lambda^*$ after replacements and switches that agrees with $\Lambda$ on $\Delta_n$.			 					
			 					
			 				\end{proof}
			 				
			 				\begin{lemma}\label{lem:same}
			 					If labelings $\Lambda$ and $\Lambda'$ of $\Delta_n$ agree on $L_n$, then so do their images after the same Replace or Switch is applied to them.
			 				\end{lemma}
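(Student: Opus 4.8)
The plan is to recognize that both Replace and Switch act on a labeling by pulling it back along a fixed self-map of the vertex set $\Delta_n$ that is determined solely by the chosen sites $x,y$, with no reference whatsoever to the labels being moved. Once the operations are written in this ``reindexing'' form, the lemma reduces to the single observation that the relevant self-map carries $L_n$ into $L_n$, so agreement on the last row is automatically preserved.

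First I would rewrite the Replace operation as a pullback: define $\sigma\colon \Delta_n \to \Delta_n$ by $\sigma(xw)=yw$ for every word $w$ on $K$ with $0 \le |w| \le n-|x|$, and $\sigma(t)=t$ for all other sites $t$. Comparing with the definition of $R(x,y)$ shows that $R(x,y)(\Lambda)=\Lambda\circ\sigma$, and the very same $\sigma$ (which does not depend on $\Lambda$) also satisfies $R(x,y)(\Lambda')=\Lambda'\circ\sigma$. The only point needing a word of justification is that $\sigma$ is a well-defined function, which is clear because the sites $xw$ range over exactly the full subtree hanging below $x$, so the two clauses of the definition partition $\Delta_n$.

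Next I would check level preservation. Since $x=zu$ and $y=zv$ with $z\in L_{r-1}$, we have $|x|=|y|=r$. If $t\in L_n$ is of the form $xw$, then $|w|=n-r$ and $\sigma(t)=yw$ has length $|y|+|w|=r+(n-r)=n$, so $\sigma(t)\in L_n$; every other site is fixed by $\sigma$ and hence stays in $L_n$ if it began there. Thus $\sigma(L_n)\subseteq L_n$, and for any $t\in L_n$,
\[
R(x,y)(\Lambda)(t)=\Lambda(\sigma(t))=\Lambda'(\sigma(t))=R(x,y)(\Lambda')(t),
\]
where the middle equality is exactly the hypothesis that $\Lambda$ and $\Lambda'$ agree on $L_n$ (applied at the point $\sigma(t)\in L_n$). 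This settles the Replace case.

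For Switch the argument is identical, with $\sigma$ replaced by the permutation $\pi$ of $\Delta_n$ given by $\pi(xw)=yw$, $\pi(yw)=xw$, and $\pi(t)=t$ otherwise; here one uses that the subtrees below $x$ and below $y$ are disjoint (since $u\neq v$ forces $xw\neq yw'$), so $\pi$ is a genuine involution. The same length count shows $\pi(L_n)\subseteq L_n$, and the displayed chain of equalities goes through verbatim. I do not expect any real obstacle: the content of the lemma is entirely the label-independence of Replace and Switch, and once that is exposed as a pullback along a level-preserving map, the conclusion is immediate.
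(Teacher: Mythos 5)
Your proof is correct and rests on the same mechanism as the paper's: both arguments come down to the fact that Replace and Switch move labels by position, independently of the labels themselves, and that this positional rearrangement carries $L_n$ into $L_n$. The paper verifies agreement piecewise, comparing the segments $L_{n-r}(x)$ and $L_{n-r}(y)$ of the last row together with the untouched remainder, while your pullback formulation $R(x,y)(\Lambda)=\Lambda\circ\sigma$ with $\sigma(L_n)\subseteq L_n$ packages the identical observation into a single pointwise computation --- a cleaner write-up of essentially the same argument rather than a different route.
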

			 				\begin{proof}
			 					$\Lambda(L_n) = \Lambda'(L_n)$ means that all of their corresponding substrings are equal. 
			 					When we apply the same Replace or Switch operation to corresponding pairs of substrings in each, we end up with the two strings still being equal.
			 					
			 					More precisely, 
			 					suppose that $r \geq 1$, $x,y \in L_r \subseteq \Delta_n$, $x=zu, y=zv$ for some $z \in L_{r-1}, u,v \in K$, and we apply $R(x,y)$ or $S(x,y)$ to each of $\Lambda$ and $\Lambda'$, producing $\tilde \Lambda$ and $\tilde \Lambda'$, respectively.
			 					Denote by $\Delta_{n-r}(x)$ the subtree of height $n-r$ under the site $x$, and by $\Delta_{n-r}(y)$ the subtree of height $n-r$ under $y$, and by $L_{n-r}(x), L_{n-r}(y)$ their respective final rows. 
			 					Since $\Lambda$ and $\Lambda'$ agree on $L_n$, they agree on $L_{n-r}(x)$, and they agree on $L_{n-r}(y)$. 
			 					After the replacement or switch is applied, $\tilde \Lambda (L_{n-r}(x))=\tilde \Lambda'(L_{n-r}(x))$, and $\tilde \Lambda (L_{n-r}(y))= \tilde \Lambda'(L_{n-r}(y))$. 
			 					Since no other sections of the strings $\Lambda(L_n)$ and $\Lambda'(L_n)$ were changed by the replacement or switch, $\tilde \Lambda(L_n)= \tilde \Lambda'(L_n)$.
			 				\end{proof}

			 				Now we are ready to state and prove the key result that guarantees the efficacy of the relation-finding algorithm. 
			 			\begin{proposition}\label{prop:n}
			 				{1. Assume that $k \geq s_A$.
			 					If $n \geq 1$ and $i \Rightarrow_{n+1}j$, then {for every $a \in S(i)$ there is $b=b(a) \in S(j)$ such that $a \Rightarrow_n b$.}} \\
			 				{2. Conversely, suppose that $n \geq 1$, $i, j \in D$, and for every $a \in S(i)$ there is $b=b(a) \in S(j)$ (possibly $b(a)=a$) such that $a \Rightarrow_n b(a)$. 
			 					Then $i \Rightarrow_{n+1}j$.}
			 			\end{proposition}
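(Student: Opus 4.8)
I would treat the two directions separately, dispatching Part 2 quickly and concentrating on Part 1, where the hypothesis $k \geq s_A$ is essential.

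For Part 2 I would simply invoke Proposition \ref{prop:severalgen}. Writing $S(i) = \{a_1, \dots, a_p\}$ and setting $b_m = b(a_m) \in S(j)$, the hypotheses of that proposition are met: for each $c \notin S(i)$ we have $A_i(c) = 0 \le A_j(c)$, while $A_i(a_m) = 1$ and $A_j(b_m) = 1$ because $a_m \in S(i)$ and $b_m \in S(j)$, and $a_m \Rightarrow_n b_m$ by assumption. Proposition \ref{prop:severalgen} then yields $\mathcal F_{n+1}(i) \subseteq \mathcal F_{n+1}(j)$, i.e. $i \Rightarrow_{n+1} j$.

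The substance is Part 1. Fix $a \in S(i)$; the goal is to produce a \emph{single} $b \in S(j)$ with $\mathcal F_n(a) \subseteq \mathcal F_n(b)$, and the main difficulty is precisely this uniformity: naively transporting one configuration at a time across $i \Rightarrow_{n+1} j$ yields a $b$ that could depend on the configuration chosen. To pin down $b$ once and for all, I would fix a full labeling $\Lambda^*$ of $\Delta_{n+1}^i$. Since $k \geq s_A \geq |S(i)|$, fullness at the root forces every symbol of $S(i)$, and in particular $a$, to occur on $L_1$; let $x_1 \in L_1$ be a site with $\Lambda^*(x_1) = a$. From $i \Rightarrow_{n+1} j$ we get $\Lambda^*(L_{n+1}) \in \mathcal F_{n+1}(i) \subseteq \mathcal F_{n+1}(j)$, so there is a labeling $\Lambda'$ of $\Delta_{n+1}^j$ agreeing with $\Lambda^*$ on $L_{n+1}$; I then set $b = \Lambda'(x_1) \in S(j)$, a choice depending only on $\Lambda^*$ and $\Lambda'$, not on any target configuration.

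To see that this $b$ works, take any $\mu \in \mathcal F_n(a)$, realized by a labeling $\nu$ of the height-$n$ tree $\Delta_n^a$. The subtree of $\Lambda^*$ rooted at $x_1$ is itself a full labeling of $\Delta_n^a$, so by Lemma \ref{lem:switch} (here again using $k \geq s_A$) there is a sequence of replacements and switches $\sigma_1, \dots, \sigma_t$, acting only on sites strictly below $x_1$, carrying it to $\nu$. Applying these operations to $\Lambda^*$ produces a labeling $\Lambda$ of $\Delta_{n+1}^i$ that realizes $\mu$ on the block of $L_{n+1}$ lying under $x_1$; applying the \emph{same} operations to $\Lambda'$ produces $\tilde\Lambda'$ on $\Delta_{n+1}^j$. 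By Lemma \ref{lem:same}, $\Lambda$ and $\tilde\Lambda'$ still agree on $L_{n+1}$; and since none of the $\sigma_s$ alters the level-$1$ site $x_1$, its label in $\tilde\Lambda'$ remains $b$. Reading off the block of $L_{n+1}$ under $x_1$ then exhibits the subtree of $\tilde\Lambda'$ under $x_1$ as a labeling of $\Delta_n^b$ realizing $\mu$, so $\mu \in \mathcal F_n(b)$. As $\mu$ was arbitrary this gives $a \Rightarrow_n b$, completing Part 1. I expect the most delicate bookkeeping to be the verification that the operations $\sigma_s$ both leave the label at $x_1$ untouched and preserve agreement on $L_{n+1}$, which is exactly what Lemma \ref{lem:same} and the level-placement of the $\sigma_s$ are there to supply.
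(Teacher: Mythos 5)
Your proof is correct and follows essentially the same route as the paper's: Part 2 via the subtree-replacement argument packaged in Proposition \ref{prop:severalgen}, and Part 1 by fixing a full labeling of $\Delta_{n+1}^i$, defining $b=\Lambda'(x_1)$ from a labeling of $\Delta_{n+1}^j$ agreeing on $L_{n+1}$, and transporting the Lemma \ref{lem:switch} sequence of replacements and switches via Lemma \ref{lem:same}. Your explicit observations that $k \geq s_A$ forces every element of $S(i)$, hence $a$, to appear on $L_1$ of the full labeling, and that the operations act strictly below $x_1$ so its label stays $b$, are points the paper leaves implicit, but the argument is the same.
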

			 			\begin{proof}
			 				Let $\Lambda^i_{n+1}$ be a ``full" labeling of $\Delta^i_{n+1}$ (so that the root has label $i \in D$), meaning that at each site {$x \in \Delta^i_{n+1} \setminus L_{n+1}$} the followers $xf, f \in \{1, \dots, k\}$, are labeled, among them, with {\em all} the followers of the label of $x$ that are allowed by the transition matrix $A$:
			 				\be
			 				\{\Lambda(xf): f \in \{1, \dots, k\}\} = \{b \in D: A(\Lambda(x),b)=1\}.
			 				\en 
			 				Since $i \Rightarrow_{n+1} j$, there is an allowed labeling $\Lambda^j_{n+1}$ of $\Delta^j_{n+1}$ that has $j$ at the root and agrees on $L_{n+1}$ with $\Lambda^i_{n+1}$.

			 				
			 				{Let $x \in L_1$ be a site in $\Delta^i_{n+1}$ that is occupied by a symbol 
			 					$\Lambda^i_{n+1}(x)=a \in D$. If $a \in S(j)$, we may take $b(a)=a$ and we will be done. So assume that 
			 					$a \in S(i) \setminus S(j)$. 
			 					Denote by $b=b(a)$ the label of site $x$ assigned by $\Lambda^j_{n+1}$, i.e. $b=\Lambda^j_{n+1}(x)$. 
			 					We claim that $a \Rightarrow_n b$.}
			 				
			 				Let $\lambda^*=\Lambda_{n+1}^i|\Delta_n(x)$ denote the restriction of the full labeling $\Lambda_{n+1}^i$ to the subtree $\Delta_n(x)$, and 
			 				let $\lambda$ be any allowed labeling of the subtree $\Delta_n(x)$ {that has $a$ at the root}. 
			 				We aim to show that there is an allowed labeling $\lambda'$ of $\Delta_n(x)$ that has $b$ at the root and agrees on the last row $L_n$ of $\Delta_n(x)$ with $\lambda$. 
			 				
			 				Let {$\lambda_1=\Lambda^j_{n+1}|\Delta_n(x)$. 
			 					Then $\lambda^*$ and $\lambda_1$ agree on $L_n$.
			 					By Lemma \ref{lem:switch}, $\lambda$ can be produced by applying a sequence of replacements and switches, starting with $\lambda^*$. 
			 					Let us apply simultaneously the same sequence of operations to $\lambda_1$. 
			 					By Lemma \ref{lem:same}, after each operation, the new labelings continue to agree on $L_n$. 
			 					At the end we arrive at the labeling $\lambda$ on $\Delta_n^a$ and some labeling $\lambda'$ on $\Delta_n(x)$, which still has label $b(a)$ at its root, and these two labelings agree on $L_n$. 
			 					See Figure \ref{fig:switch} for an illustration of the sequence of moves in a particular case.

		{2. Conversely, assume that for every $a \in S(i)$ there is $b=b(a) \in S(j)$ (possibly 
		$b(a)=a$) such that $a \Rightarrow_n b(a)$. 
		Let $\Lambda^i_{n+1}$ be a labeling of $\Delta^i_{n+1}$ (with $i$ at the root). 
		For every site $x \in L_1$, with label $a(x)=\Lambda^i_{n+1}(x)$, by hypothesis there is $b=b(a(x)) \in S(j)$ (possibly $b(a(x))=a(x)$) such that $a(x) \Rightarrow_n b(a(x))$. 
		Then for every $x \in L_1$ the labeling $\lambda = \Lambda^i_{n+1}|\Delta_n^{a(x)}$ has a corresponding labeling $\lambda''$ of $\Delta_n^{b(a(x))}$, also with root at site $x \in L_1$, that agrees with $\lambda$ on row $n$ of $\Delta_n^{a(x)}$.}
		
		{The labeled subtrees $\Delta_n^{b(a(x))}, x \in L_1$, fill up $\Delta^j_{n+1}$ and so determine a labeling $\Lambda^j_{n+1}$ of $\Delta^j_{n+1}$ that agrees on $L_{n+1}$ with $\Lambda^i_{n+1}$.}
		
		Note that (2) does not require $k \geq s_A$.
		}	\end{proof}
			
			 					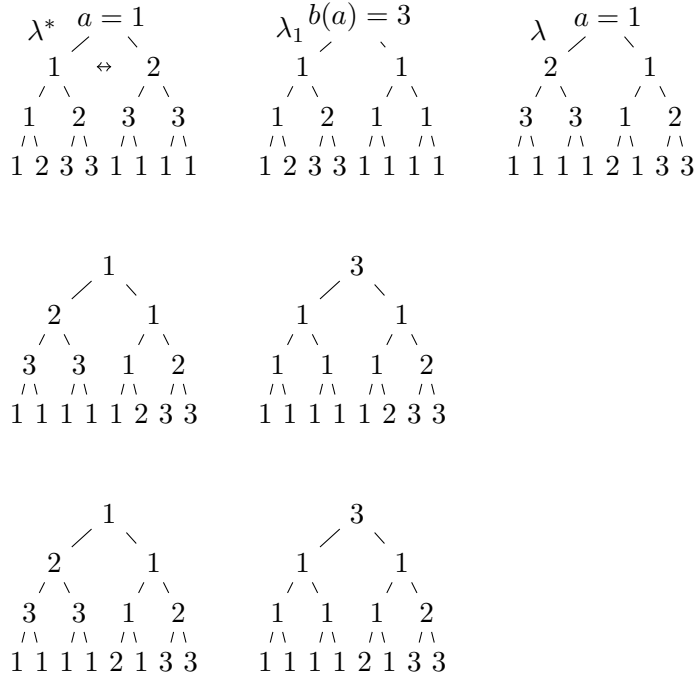
\begin{figure}[h]\label{fig:switch}
			 						\centering
			 					\begin{tikzpicture} [scale=.33]
			 					\node at (1,25.5) (lambda*) {$\lambda^*$}; \node at ( 11,25.5) (lambda1) {$\lambda_1$}; \node at ( 21,25.5) (lambda) {$\lambda$};
			 					
	\node at (3.8,26) (a) {$a=1$}; \node at (13.8,26) (3) {$b(a)=3$}; \node at (23.8,26) (1) {$a=1$};
	
	\node at (1.5,24) (u1) {$1$};\node at (5.5,24) (u2) {$2$}; \node at (11.5,24) (u3) {$1$}; \node at (15.5,24) (u4) {$1$}; \node at (21.5,24) (u5) {$2$}; \node at (25.5,24) (u6){$1$};
	
	\node at (.5,22) (x1) {$1$}; \node at (2.5,22) (x2) {$2$}; \node at (4.5,22) (x3) {$3$}; \node at (6.5,22) (x4) {$3$};
	\node at (10.5,22) (x5) {$1$};\node at (12.5,22) (x6) {$2$}; \node at (14.5,22) (x7) {$1$}; \node at (16.5,22) (x8) {$1$};
	\node at (20.5,22) (x9) {$3$}; \node at (22.5,22) (x10) {$3$}; \node at (24.5,22) (x11) {$1$}; \node at (26.5,22) (x12) {$2$};
	
	\node at (0,20) (y1) {$1$}; \node at (1,20) (y2) {$2$}; \node at (2,20) (y3) {$3$}; \node at (3,20) (y4) {$3$};
	\node at (4,20) (y5) {$1$}; \node at (5,20) (y6) {$1$}; \node at (6,20) (y7) {$1$}; \node at (7,20) (y8) {$1$};
			 						
			\node at (10,20) (z1) {$1$}; \node at (11,20) (z2) {$2$}; \node at (12,20) (z3) {$3$}; \node at (13,20) (z4) {$3$};
		\node at (14,20) (z5) {$1$}; \node at (15,20) (z6) {$1$}; \node at (16,20) (z7) {$1$}; \node at (17,20) (z8) {$1$};	 				
		
			\node at (20,20) (w1) {$1$}; \node at (21,20) (w2) {$1$}; \node at (22,20) (w3) {$1$}; \node at (23,20) (w4) {$1$};
		\node at (24,20) (w5) {$2$}; \node at (25,20) (w6) {$1$}; \node at (26,20) (w7) {$3$}; \node at (27,20) (w8) {$3$};			
			\node at (3.7,16) (a') {$1$}; \node at (13.7,16) (3') {$3$}; 
		
		\node at (1.5,14) (u1') {$2$};\node at (5.5,14) (u2') {$1$}; \node at (11.5,14) (u3') {$1$}; \node at (15.5,14) (u4') {$1$}; 
		
		\node at (.5,12) (x1') {$3$}; \node at (2.5,12) (x2') {$3$}; \node at (4.5,12) (x3') {$1$}; \node at (6.5,12) (x4') {$2$};
		\node at (10.5,12) (x5') {$1$};\node at (12.5,12) (x6') {$1$}; \node at (14.5,12) (x7') {$1$}; \node at (16.5,12) (x8') {$2$};

		\node at (0,10) (y1') {$1$}; \node at (1,10) (y2') {$1$}; \node at (2,10) (y3') {$1$}; \node at (3,10) (y4') {$1$};
		\node at (4,10) (y5') {$1$}; \node at (5,10) (y6') {$2$}; \node at (6,10) (y7') {$3$}; \node at (7,10) (y8') {$3$};
		
		\node at (10,10) (z1') {$1$}; \node at (11,10) (z2') {$1$}; \node at (12,10) (z3') {$1$}; \node at (13,10) (z4') {$1$};
		\node at (14,10) (z5') {$1$}; \node at (15,10) (z6') {$2$}; \node at (16,10) (z7') {$3$}; \node at (17,10) (z8') {$3$};	 
							\node at (3.7,6) (a'') {$1$}; \node at (13.7,6) (3'') {$3$}; 
					
					\node at (1.5,4) (u1'') {$2$};\node at (5.5,4) (u2'') {$1$}; \node at (11.5,4) (u3'') {$1$}; \node at (15.5,4) (u4'') {$1$}; 
					
					\node at (.5,2) (x1'') {$3$}; \node at (2.5,2) (x2'') {$3$}; \node at (4.5,2) (x3'') {$1$}; \node at (6.5,2) (x4'') {$2$};
					\node at (10.5,2) (x5'') {$1$};\node at (12.5,2) (x6'') {$1$}; \node at (14.5,2) (x7'') {$1$}; \node at (16.5,2) (x8'') {$2$};

					\node at (0,0) (y1'') {$1$}; \node at (1,0) (y2'') {$1$}; \node at (2,0) (y3'') {$1$}; \node at (3,0) (y4'') {$1$};
					\node at (4,0) (y5'') {$2$}; \node at (5,0) (y6'') {$1$}; \node at (6,0) (y7'') {$3$}; \node at (7,0) (y8'') {$3$};
					
					\node at (10,0) (z1'') {$1$}; \node at (11,0) (z2'') {$1$}; \node at (12,0) (z3'') {$1$}; \node at (13,0) (z4'') {$1$};
					\node at (14,0) (z5'') {$2$}; \node at (15,0) (z6'') {$1$}; \node at (16,0) (z7'') {$3$}; \node at (17,0) (z8'') {$3$};	 
					
						\draw  (a) -- (u1); \draw (a) -- (u2); \draw (3) -- (u3); \draw (3) -- (u4); \draw (1) -- (u5); \draw (1) -- (u6);
					
						\draw (u1) -- (x1); \draw (u1) -- (x2); \draw (u2) -- (x3); \draw (u2) -- (x4);
					\draw (u3) -- (x5); \draw (u3) -- (x6); \draw (u4) -- (x7); \draw (u4) -- (x8);
					
					\draw (u5) -- (x9); \draw (u5) -- (x10); \draw (u6) -- (x11); \draw(u6) -- (x12);
					
					\draw (x1) -- (y1); \draw (x1) -- (y2); \draw (x2) -- (y3); \draw (x2) -- (y4); \draw (x3) -- (y5); \draw (x3) -- (y6); \draw (x4) -- (y7); \draw (x4) -- (y8);
					
					\draw (x5) -- (z1); \draw (x5) -- (z2); \draw (x6) -- (z3); \draw (x6) -- (z4); \draw (x7) -- (z5); \draw (x7) -- (z6); \draw (x8) -- (z7); \draw (x8) -- (z8);
					
					\draw (x9) -- (w1); \draw (x9) -- (w2); \draw (x10) -- (w3); \draw (x10) -- (w4); \draw (x11) -- (w5); \draw (x11) -- (w6); \draw (x12) -- (w7); \draw (x12) -- (w8);
					
			\draw  (a') -- (u1'); \draw (a') -- (u2'); \draw (3') -- (u3'); \draw (3') -- (u4'); 
			
			\draw (u1') -- (x1'); \draw (u1') -- (x2'); \draw (u2') -- (x3'); \draw (u2') -- (x4');
			\draw (u3') -- (x5'); \draw (u3') -- (x6'); \draw (u4') -- (x7'); \draw (u4') -- (x8');

			\draw (x1') -- (y1'); \draw (x1') -- (y2'); \draw (x2') -- (y3'); \draw (x2') -- (y4'); \draw (x3') -- (y5'); \draw (x3') -- (y6'); \draw (x4') -- (y7'); \draw (x4') -- (y8');
			
			\draw (x5') -- (z1'); \draw (x5') -- (z2'); \draw (x6') -- (z3'); \draw (x6') -- (z4'); \draw (x7') -- (z5'); \draw (x7') -- (z6'); \draw (x8') -- (z7'); \draw (x8') -- (z8');

		
			\draw  (a'') -- (u1''); \draw (a'') -- (u2''); \draw (3'') -- (u3''); \draw (3'') -- (u4''); 
		
		\draw (u1'') -- (x1''); \draw (u1'') -- (x2''); \draw (u2'') -- (x3''); \draw (u2'') -- (x4'');
		\draw (u3'') -- (x5''); \draw (u3'') -- (x6''); \draw (u4'') -- (x7''); \draw (u4'') -- (x8'');

		\draw (x1'') -- (y1''); \draw (x1'') -- (y2''); \draw (x2'') -- (y3''); \draw (x2'') -- (y4''); \draw (x3'') -- (y5''); \draw (x3'') -- (y6''); \draw (x4'') -- (y7''); \draw (x4'') -- (y8'');
		
		\draw (x5'') -- (z1''); \draw (x5'') -- (z2''); \draw (x6'') -- (z3''); \draw (x6'') -- (z4''); \draw (x7'') -- (z5''); \draw (x7'') -- (z6''); \draw (x8'') -- (z7''); \draw (x8'') -- (z8'');
		
		\node at (2.8,24) (c1) {}; \node at (4.2,24) (c2) {};
		\draw [<->] (c1) -- (c2);
					\end{tikzpicture}
		
		\caption{An example of the sequence of moves.} \label{fig:switch}
		\end{figure}

			 			\begin{example}\label{ex:fig}
			 			{Figure \ref{fig:switch} shows a small example of this process that does two switches. 
			 				The transition matrix is $A=[110|001|100]$.
			 				In the first row of the figure we see three $\Delta_3$'s. 
			 				The first has its root at a site $x$ on $L_1$ in $\Delta_4$, and its labeling $\lambda^*$ is the restriction of a full labeling $\Lambda_4^1$ of $\Delta_4$; its root has label $a=1$.
			 				The second is the $\Delta_3$ that is rooted at site $x$ whose labeling $\lambda_1$ is the restriction of the labeling $\Lambda_4^2$ of $\Delta_4^2$ that is guaranteed to agree with $\Lambda_4^1$ on $L_4$; its root has label $b(a)=3$.
			 				The third is an arbitrary labeling of $\Delta_3$ that has $a=1$ at its root, as it should.}
			 			
			 			{We apply Switch and Replace as necessary on the first $\Delta_3$ to make it agree with the third, applying the same moves simultaneously on the second $\Delta_3$. 
			 				First we see that the symbols on the first row should be switched, so we switch the entire subtrees rooted at their sites. This produces the second row consisting of modifications of the first two $\Delta_3$'s. 
			 				Then we switch a single pair of sites on the last row. 
			 				At the end, the $\Delta_3$ on the left has the labeling $\lambda$ (illustrating Lemma \ref{lem:switch}), and the one on the right has $b(a)=3$ at the root and agrees with the one on the left (and hence the given $\lambda$) on $L_3$.}
			 		\end{example}

			 			We will use Proposition \ref{prop:n} to show that all relations $a \Rightarrow_n b(a)$ are found by the algorithm described above. 
			 			Recall that $\mathcal K_n$ denotes the set of relations that are known (have been discovered by the algorithm) up to time $n$, and $\mathcal D_n$ denotes the set of relations of degree $n$, the ones that can be ``realized on $\Delta_n$" (and not on $\Delta_m$ for any $m<n$).
			 			Clearly $\mathcal K_n \subseteq \cup_{m \leq n} \mathcal D_m$ for all $n \geq 0$. 
			 			We aim to prove that $\mathcal K_n = \cup_{m \leq n} \mathcal D_m$.
			 				
			 					Eventually the discovery process must end, but it is not clear that {\em all}~ the relations $i \Rightarrow j$ will have been discovered. 
			 					Some might get skipped when their turn to be discovered (presumably degree) arises.
			 			 			This might happen as follows. 
			 			 			Suppose that $i \Rightarrow_{n+1} j$. 
			 			Then given a labeling $\lambda$ of some $\Delta_{n+1}^i$ with an entry $a$ somewhere on the first line, there is a labeling $\lambda'$ of $\Delta_{n+1}^j$ which agrees with $\lambda$ on $L_{n+1}$. 
			 			We can read off the entry $b(a)$ of $\lambda'$ at the spot on $\Delta_{n+1}^j$ that corresponds to the spot labeled by $a$ in $\Delta_{n+1}^i$. 
			 			
			 			But why would we have $a \Rightarrow_n b(a)$? 
			 			For some labelings of $\Delta_n^a$ there might not be any labeling of $\Delta_n^{b(a)}$ such that the two agree on $L_n$ (although it works for the restrictions of $\lambda$ and $\lambda'$).
			 			Moreover, for fixed $a$ the choice of $b(a)$ might depend on the labeling $\lambda$, blocking the possibility that the relation $i \Rightarrow_{n+1} j$ arises from a relation such as $a \Rightarrow b(a)$. 
			 			The following theorem answers this question when the tree dimension $k$ is greater than or equal to the maximum row sum $s_A$ of the transition matrix $A$: 
			 			under that assumption, the algorithm described in Section \ref{sec:procedure} is guaranteed to find all relations among all possible initial symbols at the root.

			 		\begin{theorem}\label{thm:disc}
			 			{Suppose that $k \geq s_A$ and $i \Rightarrow_{n+1} j$ for some $n \geq 1$.
			 				Then the algorithm discovers this relation during or before round $n+1$.} 
			 			In other words, the height of any relation is less than or equal to its degree.
			 				\end{theorem}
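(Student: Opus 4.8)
The plan is to recast the conclusion as a statement about the evolving relation matrix $R_n$ and then prove it by a short induction on the round number, with Proposition \ref{prop:n}(1) supplying all the real content.

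First I would observe that Remark \ref{rem:suff} already gives the inequality in one direction: the height of a relation is always at least its degree. So it suffices to prove the reverse inequality, which is exactly the displayed assertion that a relation realizable on $\Delta_{n+1}$ is discovered by the end of round $n+1$. Rewriting this in terms of the matrices produced by the algorithm, the goal becomes the assertion $(\ast_n)$: for all $i,j \in D$, if $i \Rightarrow_{n+1} j$ then $R_{n+1}(i,j) > 0$. I would prove $(\ast_n)$ for every $n \geq 0$ by induction on $n$.

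For the base case $n=0$, the relation $i \Rightarrow_1 j$ is by definition equivalent to $A_i \leq A_j$, which is precisely the test carried out in Round $1$; hence $R_1(i,j)>0$. (The reflexive relations $i \Rightarrow i$ are installed in Round $0$, and since the algorithm only ever turns a zero entry positive and never deletes a relation, $R_n(a,a)>0$ for every $a$ and every $n$.) For the inductive step I would assume $(\ast_{n-1})$ with $n \geq 1$ and suppose $i \Rightarrow_{n+1} j$. Because $k \geq s_A$ and $n \geq 1$, Proposition \ref{prop:n}(1) applies and yields, for each $a \in S(i)$, a symbol $b=b(a) \in S(j)$ with $a \Rightarrow_n b(a)$. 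If $b(a)=a$ this is the reflexive relation and $R_n(a,a)>0$ automatically; otherwise the inductive hypothesis $(\ast_{n-1})$ gives $R_n(a,b(a))>0$. In either case, for every $a \in S(i)$ there is $b \in S(j)$ with $R_n(a,b)>0$, which is exactly the hypothesis of the \emph{if} direction of Proposition \ref{prop:sum} (the direction that does not require $k \geq s_A$). That proposition then gives $R_{n+1}(i,j)>0$, completing the induction. Finally, combining $(\ast_n)$ over all $n$ with Remark \ref{rem:suff} shows that height equals degree for every relation.

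I do not expect a serious obstacle here, precisely because the substantive work has been discharged earlier: Proposition \ref{prop:n}(1), proved via the Switch/Replace reduction (Lemmas \ref{lem:switch} and \ref{lem:same}), is the only place the hypothesis $k \geq s_A$ enters, and the rest of the theorem is bookkeeping. The one point to watch is that Proposition \ref{prop:n}(1) decomposes $i \Rightarrow_{n+1} j$ only into relations $a \Rightarrow_n b(a)$ that are \emph{realizable} on $\Delta_n$, not into relations already \emph{recorded} in $R_n$; bridging that gap is exactly what the inductive hypothesis does, so the induction must be organized so that the decomposition feeds back into the previously proved case, and the base case ($n=0$) must be handled directly since Proposition \ref{prop:n}(1) is only available for $n \geq 1$.
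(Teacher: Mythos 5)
Your proof is correct and follows essentially the same route as the paper: an induction on the round number whose inductive step decomposes $i \Rightarrow_{n+1} j$ via Proposition \ref{prop:n}(1) into relations $a \Rightarrow_n b(a)$, converts ``realizable'' into ``recorded'' by the inductive hypothesis, and closes the loop with the sufficiency direction (Proposition \ref{prop:sum}, equivalently Proposition \ref{prop:n}(2)). Your reformulation in terms of positivity of the entries of $R_n$, with the base case $n=0$ checked directly against the Round~$1$ test $A_i \leq A_j$, is just a bookkeeping variant of the paper's induction on $\mathcal D_m \subseteq \mathcal K_m$.
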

			 		\begin{proof}
			 					{We use induction to show that when $k \geq s_A$ all existing relations are indeed found by the process.
			 					For each $n \geq 0$ denote by $\mathcal D_n$ the set of relations of degree $n$, and by $\mathcal K_n$ the set of relations of height no more than $n$.
			 					Let $\mathcal D=\cup_n \mathcal D_n$ and $\mathcal K=\cup_n \mathcal K_n$.
			 					For $n=0$ all existing relations $i \Rightarrow_0 i$ are found. 
			 					For $n=1$, all existing relations $i \Rightarrow_1 j$ are found by examining whether or not $A_i \leq A_j$.}
			 				Suppose now that $n \geq 1$ and {$\mathcal D_m \subseteq \mathcal K_m$ for $0 \leq m \leq n$.} 
			 				Given a relation $(i \Rightarrow_{n+1} j) \in \mathcal D_{n+1}$, apply Proposition \ref{prop:n} (1) to find for every $a \in S(i)$ a symbol $b=b(a) \in S(j)$ such that $a \Rightarrow_n b$. 
			 				These relations are {\em a priori} in {$\cup_{m\leq n}\mathcal D_m$}, but by the induction hypothesis we have {$\cup_{m \leq n}\mathcal D_m \subseteq \mathcal K_n$.} 
			 				Thus when we apply Proposition \ref{prop:n} (2) to conclude, as part of our process
			 				{(on Round $n+1$)}, that $i \Rightarrow_{n+1} j$, that relation is {added to $\mathcal K_{n+1}$.}
			 				It follows that 
			 				{$\mathcal D_{n+1} \subseteq \mathcal K_{n+1}$, and hence}
			 				$\mathcal D = \mathcal K$ when $k \geq s_A$.
			 							 		\end{proof}

			 			\begin{example}\label{ex:smallk}
			 				The hypothesis in Theorem \ref{thm:disc} that $k \geq s_A$ is essential. 
			 					{If $k < s_A$, it might happen in round $n+1$ that 
			 					for some $a \in S(i)$ there is no $b \in S(j)$ such that $a \Rightarrow_n b$, yet
			 					$i \Rightarrow_{n+1} j$.
			 					This occurs for $k=2$ and $s_A=3$ in the example
			 					\be
			 					A=[0111|1011|1101|1110],
			 					\en
			 					for which there are no relations of degree $1$, yet 
			 					$i \Rightarrow_2 j$ for all $i,j \in \{1,2,3,4\}$.}
			 					(When $k \geq 3$, $A \notin \cup_n P(k,n)$.)
			 			\end{example}

			 			 \section{further comments}\label{sec:comments}
	
		{\bf Computing time.}	In the case that $k \geq s_A$, because of Theorem \ref{thm:disc}, each relation $i \Rightarrow _{n+1} j$ is determined by the relations $a \Rightarrow_n b$ already in hand according to the process described above.
			Therefore, given $i,j$, the question of whether $i \Rightarrow_n  j$ for some $n$ is eventually answered, at least when $k \geq s_A$. 			
			But how to bound the number of rounds required?
		Given that $i \Rightarrow_{n+1} j$, it might take through round $n$ to discover that relation. 
		
		{However, given $A$ there are at most $d^2$ relations $i \Rightarrow j$, each with a finite degree, so their degrees are bounded. 
			Since the height of each relation equals its degree, all relations are eventually discovered in a finite time determined by $A$.}
		
		The relations matrix $R$ has $d^2$ entries, and on each round at least one entry is changed from $0$ to $1$. 
		Therefore the algorithm terminates after no more than $d^2$ rounds, answering decisively (when $k \geq s_A$) whether or not $A \in \cup_n P(k,n)$.  
		It is also possible to estimate the maximum possible number of computations that might be required in each round.

	{\bf Matrix products.}	One may view the problem of determining what configurations can appear on a target set given a starting symbol in terms of multiplication of transition matrices. 
	{Suppose for now that $k=2$.}
		For each $n=0,1, \dots$ denote by $\Lambda_n$ the set of all labelings of the $n$'th row $L_n$ of $\Delta_n$ by elements of the alphabet $D=\{1,\dots ,d\}$. 
		Thus $\Lambda_0=D, \Lambda_1=D^2, \dots , \Lambda_n=D^{2^n}$. 
		Fixing a $d \times d$ $0,1$ matrix $A$, denote by $A_n, n \geq 0$, the $|\Lambda_n| \times |\Lambda_{n+1}|$ matrix specifying, by means of its $0,1$ entries, the allowed transitions from $\Lambda_n$ to $\Lambda_{n+1}$.
		
		For example, if 
		\be
	A=	\begin{bmatrix}
			1 &1\\
			1 &0
			\end{bmatrix}
			=A_0, \quad\text{ then } \quad
A_1= \begin{matrix}
	&11 &12 &21 &22\\
1   &1  &1  &1  &1 \\
2   &1  &0  &0  &0
\end{matrix}
\en	
 $A_2$ is $4 \times 16$, etc. 

	The entries in the product $A_0 \cdots A_n$ will tell the numbers of paths from each symbol in $D$ (indexing the rows) to each labeling of $L_n$ (indexing the columns). 	
	If the rows are identical mod $0$, meaning that they all have positive entries in exactly the same places, then $A \in P(2,n)$. 
	If every row is positive, then $A \in P^*(2,n)$.

	So the problem of deciding which $A \in P(2,n)$ for some $n$ is solved in principle.
	But how can we answer the question for a given $A$ without multiplying out $A_0 \cdots A_n$ and examining the result? 
	Since the matrices grow exponentially, maybe we are up against a growth problem as in vector addition or quantum reachability? 
	{It's even worse when $k>2$.}
	{The algorithm presented in Section \ref{sec:procedure} answers this question when $k \geq s_A$.}

	{\bf Questions.} 1. How to determine whether a given matrix is in $P(k)=\cup_n P(k,n)$ or not in the case when the tree dimension is less than the maximum row sum is an obvious question for further investigation.
	
	2, As mentioned above, the situation studied here, namely a tree and target set the $n$'th row $L_n$, motivated by computation of pressure on trees, is a simple first case. 
	One may consider the Cayley graph of any semigroup, not necessarily free, or a more general tree, or even an arbitrary graph, and an arbitrary target set, and seek to understand properties such as fairness or completeness of a matrix that specifies allowed transitions between labels of adjacent sites.

	{\bf Declarations:} The authors have no relevant financial, non-financial, or competing interests to declare. 
\begin{bibdiv}	\begin{biblist}
		\bibselect{Arrival}
	\end{biblist}
\end{bibdiv}

 \end{document}